\documentclass[reqno,11pt]{article} 
\usepackage[margin=1in]{geometry}  
\usepackage{amsmath, amssymb}
\usepackage{amsthm} 
%
\newcommand{\Mod}[1]{\ (\textup{mod}\ #1)}
%
%
\theoremstyle{plain} 
\newtheorem{theorem}{\indent\sc Theorem}[section]
\newtheorem{lemma}[theorem]{\indent\sc Lemma}

\newtheorem{proposition}[theorem]{\indent\sc Proposition}

\theoremstyle{definition} 
\newtheorem{definition}[theorem]{\indent\sc Definition}
\newtheorem{assumption}[theorem]{\indent\sc Assumption}
\newtheorem{conjecture}[theorem]{\indent\sc Conjecture}
\newtheorem{remark}[theorem]{\indent\sc Remark}
\newtheorem{example}[theorem]{\indent\sc Example}

%

%

\makeatletter
\def\address#1#2{\begingroup
\noindent\parbox[t]{7.8cm}{%
\small{\scshape\ignorespaces#1}\par\vskip1ex
\noindent\small{\itshape E-mail address}%
\/: #2\par\vskip4ex}\hfill%
\endgroup}%
\makeatother
%
\title{On Siegel invariants of certain CM-fields} 
\author{\textsc{Ja Kyung Koo, Gilles Robert, Dong Hwa Shin and Dong Sung Yoon$^*$} 
}
\date{} 
%

\begin{document}

\allowdisplaybreaks

\maketitle

\footnote{ 
2010 \textit{Mathematics Subject Classification}. Primary 11R37, Secondary 11F46, 11G15, 14K25.}
\footnote{ 
\textit{Key words and phrases}.
abelian varieties, class field theory, CM-fields, Shimura's reciprocity law, theta constants.}
\footnote{$^*$Corresponding author.}
\footnote{
\thanks{
The third author was supported by the National Research Foundation of Korea (NRF)
grant funded by the Korea government (MSIP) (2017R1A2B1006578), and by Hankuk
University of Foreign Studies Research Fund of 2018. 
The fourth (corresponding) author was
supported by Basic Science Research Program through the National Research Foundation of
Korea(NRF) funded by the Ministry of Education (2017R1D1A1B03030015).

} }

\begin{abstract}
We first construct Siegel invariants of some CM-fields
in terms of special values of theta constants,
which would be a generalization of Siegel-Ramachandra invariants of imaginary quadratic fields.
And, we further describe Galois actions on these invariants and provide some numerical examples to show that this invariant really generates the ray class field of a CM-field.
\end{abstract}

\section {Introduction}

Let $K$ be a number field and $\mathcal{O}_K$ be its ring of integers.
For a proper nontrivial ideal $\mathfrak{f}$ of $\mathcal{O}_K$
we denote by $\mathrm{Cl}(\mathfrak{f})$ and $K_\mathfrak{f}$ the ray class group of $K$ modulo $\mathfrak{f}$ and its corresponding ray class field, respectively (see \cite{Janusz}).
Suppose that there is a family
$\{\Psi_\mathfrak{f}(C)\}_{C\in\mathrm{Cl}(\mathfrak{f})}$
of algebraic numbers, which we shall call a Siegel family,
such that
\begin{itemize}
\item[(R1)] each $\Psi_\mathfrak{f}(C)$ belongs to $K_\mathfrak{f}$,
\item[(R2)] $\Psi_\mathfrak{f}(C)^{\sigma_\mathfrak{f}(D)}=\Psi_\mathfrak{f}(CD)$
for all $D\in\mathrm{Cl}(\mathfrak{f})$, where $\sigma_\mathfrak{f}:\mathrm{Cl}(\mathfrak{f})\rightarrow
\mathrm{Gal}(K_\mathfrak{f}/K)$ is the Artin reciprocity map for $\mathfrak{f}$.
\end{itemize}
Then, as is well known, every algebraic number $\Psi_\mathfrak{f}(C)$ becomes a primitive generator of $K_\mathfrak{f}$ over $K$ 
if
\begin{equation}\label{Kronecker limit}
\sum_{C\in\mathrm{Cl}(\mathfrak{f})}\chi(C)\ln
|\Psi_\mathfrak{f}(C)|\neq 0
\end{equation}
for any nontrivial character $\chi$ of $\mathrm{Cl}(\mathfrak{f})$ (\cite[Theorem 3 in Chapter 22]{Lang87}), which motivates this paper.
In particular, if $K$ is an imaginary quadratic field, then
the Siegel-Ramachandra invariants form such a Siegel family having the properties (R1) and (R2) (see $\S$\ref{introduce}).
Furthermore, they also satisfy (\ref{Kronecker limit}) in most cases by the second Kronecker limit formula.
These invariants are defined by the special values of certain modular units, Siegel functions,  which can be expressed in terms of theta constants (Remark \ref{generalization} (ii)).
\par
Let $K$ be a CM-field and $\mathfrak{f}$ be a proper nontrivial ideal of $\mathcal{O}_K$
satisfying certain conditions (Assumption \ref{pp}).
In this paper, we shall first construct a meromorphic Siegel modular function of level $N$ ($\geq2$), which would be a multi-variable generalization of the Siegel function,
by making use of theta constants (Definition \ref{bigdef} and Proposition \ref{bigtheta}).
Furthermore, we shall assign
the special value $\Theta_\mathfrak{f}(C)$ of this function to each ray class $C$ in $\mathrm{Cl}(\mathfrak{f})$,
and call it the Siegel invariant modulo $\mathfrak{f}$ at $C$ (Definition \ref{defSiegel}).
This value depends only on $\mathfrak{f}$ and the class $C$ (Propositions \ref{indep2} and \ref{indep1}),
essentially by the fact that the Siegel modular variety is a moduli space for principally polarized abelian varieties. 
Finally, we are able to show by applying Shimura's reciprocity law  that the Siegel invariant
$\Theta_\mathfrak{f}(C)$, as a possible ray class invariant (Conjecture \ref{conj}), satisfies the transformation formula (R2), that is,
\begin{equation*}
\Theta_\mathfrak{f}(C)^{\sigma_\mathfrak{f}(D)}
=\Theta_\mathfrak{f}(CD)\quad\textrm{for all}~D\in\mathrm{Cl}(\mathfrak{f})
\end{equation*}
(Theorem \ref{main}).
By making use of Maple software we also present a numerical example with a non-imaginary quadratic CM-field $K$ for which $K_\mathfrak{f}$ is generated by $\Theta_\mathfrak{f}(C)$ over $K$ (Example \ref{explicit example}).

\section {Siegel-Ramachandra invariants}\label{introduce}

Let $\mathbf{v}=\left[\begin{matrix}r\\s\end{matrix}\right]\in\mathbb{Q}^2\setminus\mathbb{Z}^2$.
The Siegel function
$g_\mathbf{v}(\tau)$
on the complex upper half-plane $\mathbb{H}=\{\tau\in\mathbb{C}~|~\mathrm{Im}(\tau)>0\}$ is given by the infinite product
\begin{equation}\label{Siegel}
g_\mathbf{v}(\tau)=-q^{\mathbf{B}_2(r)/2}
e^{\pi\mathrm{i}s(r-1)}(1-q^{r}e^{2\pi\mathrm{i}s})\prod_{n=1}^\infty
(1-q^{n+r}e^{2\pi\mathrm{i}s})(1-q^{n-r}e^{-2\pi\mathrm{i}s}),
\end{equation}
where $\mathbf{B}_2(x)=x^2-x+1/6$ is the second Bernoulli polynomial and
$q=e^{2\pi\mathrm{i}\tau}$.
If $N$ ($\geq2$) is a positive integer so that $N\mathbf{v}\in\mathbb{Z}^2$, then $g_\mathbf{v}(\tau)^{12N}$
is a meromorphic modular function of level $N$
which has neither a zero nor a pole on $\mathbb{H}$ (\cite[Theorem 1.2 in Chapter 2]{K-L}).
\par
Let $K$ be an imaginary quadratic field. Let $\mathfrak{f}$ be a proper nontrivial ideal of $\mathcal{O}_K$ in which $N$ is the smallest positive integer, and let $C\in\mathrm{Cl}(\mathfrak{f})$. Take any integral ideal $\mathfrak{c}$ in $C$, and
let $\omega_1,\omega_2\in\mathbb{C}$ and $a,b\in\mathbb{Z}$ such that
\begin{eqnarray*}
\mathfrak{f}\mathfrak{c}^{-1}&=&\mathbb{Z}\omega_1+
\mathbb{Z}\omega_2\quad\textrm{with}~\omega=\omega_1/\omega_2\in\mathbb{H},\\
N&=&a\omega_1+b\omega_2.
\end{eqnarray*}
The Siegel-Ramachandra invariant $g_\mathfrak{f}(C)$ modulo $\mathfrak{f}$ at $C$ is defined as
\begin{equation}\label{S-R}
g_\mathfrak{f}(C)=g_{\left[\begin{smallmatrix}a/N\\b/N\end{smallmatrix}\right]}
(\omega)^{12N}.
\end{equation}
This value depends only on $\mathfrak{f}$ and the class $C$, not on the choices of $\mathfrak{c}$ and $\omega_1,\omega_2$(\cite[Chapter 11, $\S$1]{K-L}). Furthermore, it lies in $K_\mathfrak{f}$ and satisfies
\begin{equation*}
g_\mathfrak{f}(C)^{\sigma_\mathfrak{f}(D)}=g_\mathfrak{f}(CD)\quad(D\in\mathrm{Cl}(\mathfrak{f})) \end{equation*}
(\cite[Theorem 1.1 in Chapter 11]{K-L}).

\begin{proposition}
Let $\mathfrak{f}=\prod_{\mathfrak{p}\,|\,\mathfrak{f}}\mathfrak{p}^{e_\mathfrak{p}}$
be the prime ideal factorization of $\mathfrak{f}$, and let
\begin{equation*}
G_\mathfrak{p}=(\mathcal{O}_K/\mathfrak{p}^{e_\mathfrak{p}})^\times/
\{\mu+\mathfrak{p}^{e_\mathfrak{p}}~|~\mu\in\mathcal{O}_K^\times\}.
\end{equation*}
If $|G_\mathfrak{p}|>2$ for every $\mathfrak{p}\,|\,\mathfrak{f}$, then
any nonzero power of $g_\mathfrak{f}(C)$ generates $K_\mathfrak{f}$ over $K$.
\end{proposition}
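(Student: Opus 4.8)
The plan is to deduce the statement from Proposition \ref{criterion} applied to the family of Siegel--Ramachandra invariants. Fix a nonzero integer $m$ and set $\Psi_\mathfrak{f}(C)=g_\mathfrak{f}(C)^m$ for $C\in\mathrm{Cl}(\mathfrak{f})$. Since $g_\mathfrak{f}(C)\in K_\mathfrak{f}$ is nonzero, each $\Psi_\mathfrak{f}(C)$ lies in $K_\mathfrak{f}$, and
\begin{equation*}
\Psi_\mathfrak{f}(C)^{\sigma_\mathfrak{f}(D)}=\bigl(g_\mathfrak{f}(C)^{\sigma_\mathfrak{f}(D)}\bigr)^m=g_\mathfrak{f}(CD)^m=\Psi_\mathfrak{f}(CD),
\end{equation*}
so $\{\Psi_\mathfrak{f}(C)\}_{C\in\mathrm{Cl}(\mathfrak{f})}$ is again a Siegel family with properties \textup{(R1)} and \textup{(R2)}. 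As $\ln|\Psi_\mathfrak{f}(C)|=m\ln|g_\mathfrak{f}(C)|$, its Stickelberger element at a character $\chi$ equals $m\,S_\mathfrak{f}(\chi)$, where $S_\mathfrak{f}(\chi)=\sum_{C}\chi(C)\ln|g_\mathfrak{f}(C)|$, and this is nonzero exactly when $S_\mathfrak{f}(\chi)$ is. Hence it is enough, for each nonidentity class $D$ in $\mathrm{Cl}(\mathfrak{f})$, to exhibit a character $\chi_D$ of $\mathrm{Cl}(\mathfrak{f})$ satisfying \textup{(S1)} $\chi_D(D)\neq1$ and \textup{(S2)} $S_\mathfrak{f}(\chi_D)\neq0$.

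Condition \textup{(S1)} poses no difficulty, since the characters of the finite abelian group $\mathrm{Cl}(\mathfrak{f})$ separate its points. For \textup{(S2)} I would invoke the evaluation of the Stickelberger element via Kronecker's second limit formula (see \cite[Chapter~11]{K-L}): for a nontrivial character $\chi$ of $\mathrm{Cl}(\mathfrak{f})$ with conductor $\mathfrak{f}_\chi$,
\begin{equation*}
S_\mathfrak{f}(\chi)=c(\chi)\prod_{\substack{\mathfrak{p}\,\mid\,\mathfrak{f}\\ \mathfrak{p}\,\nmid\,\mathfrak{f}_\chi}}\bigl(1-\overline{\chi}(\mathfrak{p})\bigr)\,L_{\mathfrak{f}_\chi}(1,\overline{\chi}),
\end{equation*}
where $c(\chi)\neq0$ and $L_{\mathfrak{f}_\chi}(s,\overline{\chi})$ is the primitive Hecke $L$-function. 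Because $\overline{\chi}$ is nontrivial, $L_{\mathfrak{f}_\chi}(1,\overline{\chi})\neq0$ by the classical nonvanishing of Hecke $L$-functions on the line $\mathrm{Re}(s)=1$; thus $S_\mathfrak{f}(\chi)\neq0$ as soon as $\overline{\chi}(\mathfrak{p})\neq1$ for every prime $\mathfrak{p}\mid\mathfrak{f}$ with $\mathfrak{p}\nmid\mathfrak{f}_\chi$, and in particular whenever the conductor of $\chi$ is divisible by every prime dividing $\mathfrak{f}$, the product then being empty. So the entire statement reduces to: \emph{for each nonidentity $D\in\mathrm{Cl}(\mathfrak{f})$ there is a character $\chi_D$ of $\mathrm{Cl}(\mathfrak{f})$ with $\chi_D(D)\neq1$ whose conductor is divisible by every prime of $\mathfrak{f}$}.

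To prove this I would begin with any character $\psi$ such that $\psi(D)\neq1$ and repair it at each prime $\mathfrak{p}\mid\mathfrak{f}$ where it is unramified. For such a $\mathfrak{p}$, pulling back along $\mathrm{Cl}(\mathfrak{f})\twoheadrightarrow\mathrm{Cl}(\mathfrak{p}^{e_\mathfrak{p}})$ and using the exact sequence $1\to G_\mathfrak{p}\to\mathrm{Cl}(\mathfrak{p}^{e_\mathfrak{p}})\to\mathrm{Cl}(\mathcal{O}_K)\to1$ shows that $\mathrm{Cl}(\mathfrak{f})$ has $|G_\mathfrak{p}|-1$ characters that are ramified at $\mathfrak{p}$ and unramified at every other prime of $\mathfrak{f}$, and that their restrictions to $G_\mathfrak{p}$ exhaust $\widehat{G_\mathfrak{p}}\setminus\{1\}$. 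Multiplying $\psi$ by one such character at each prime where $\psi$ was unramified yields a character ramified at every prime of $\mathfrak{f}$; it remains to choose these twists so that the value at $D$ stays different from $1$. If the image of $D$ in $\mathrm{Cl}(\mathcal{O}_K)$ is nontrivial, one simply twists in addition by an everywhere-unramified character nontrivial on $D$. Otherwise $D$ comes from $(\mathcal{O}_K/\mathfrak{f})^\times$, and one argues through the $\mathfrak{p}$-components: if every component of $D$ at the primes repaired above is trivial the twisting leaves the value at $D$ equal to $\psi(D)\neq1$; and if some component $\overline{d}_\mathfrak{p}\in G_\mathfrak{p}$ is nontrivial, then $\eta\mapsto\eta(\overline{d}_\mathfrak{p})$ is a nontrivial homomorphism on $\widehat{G_\mathfrak{p}}$ whose image on $\widehat{G_\mathfrak{p}}\setminus\{1\}$ has at least two elements — this is exactly where $|G_\mathfrak{p}|>2$ is used — so one of the available twists at $\mathfrak{p}$ keeps the value at $D$ away from $1$. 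With such a $\chi_D$ in hand, \textup{(S1)} and \textup{(S2)} hold and Proposition \ref{criterion} yields the assertion. I expect this last bookkeeping — forcing ramification at every prime of $\mathfrak{f}$ while retaining nontriviality at $D$ — to be the main obstacle, and the hypothesis $|G_\mathfrak{p}|>2$ to be precisely what supplies the room; if some $|G_\mathfrak{p}|\le 2$ there need not be any.
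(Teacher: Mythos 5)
The paper itself gives no argument here beyond the citation to \cite[Theorem 4.6]{K-Y2}, and the Remark immediately following confirms that the cited proof rests on the second Kronecker limit formula, exactly as in your proposal: your reduction to Proposition \ref{criterion}, the nonvanishing of $S_\mathfrak{f}(\chi)$ via $L_{\mathfrak{f}_\chi}(1,\overline{\chi})\neq0$ once the conductor of $\chi$ is divisible by every prime of $\mathfrak{f}$, and the use of $|G_\mathfrak{p}|>2$ to keep $\chi_D(D)\neq1$ while forcing ramification are all sound. Since this is essentially the approach of the source the paper defers to, the only thing worth noting is that your count of ``$|G_\mathfrak{p}|-1$ characters ramified at $\mathfrak{p}$ and unramified at every other prime'' should be read as a chosen set of lifts, one for each nontrivial element of $\widehat{G_\mathfrak{p}}$ (the full number of such characters of $\mathrm{Cl}(\mathfrak{f})$ is a multiple of this), which does not affect the argument.
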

\begin{proof}
See \cite[Theorem 4.6]{K-Y2}.
\end{proof}

\begin{remark}
\begin{itemize}
\item[(i)] This result is obtained by utilizing the second Kronecker limit formula (\cite[Theorem 9 in Chapter II]{Siegel} or \cite[Theorem 2 in Chapter 22]{Lang87}).
\item[(ii)] Without any condition, Ramachandra (\cite{Ramachandra}) constructed
a primitive generator of $K_\mathfrak{f}$ over $K$ as
a high power product of Siegel-Ramachandra invariants and
singular values of the modular discriminant $\Delta$-function.
\end{itemize}

\end{remark}

\section {Actions on Siegel modular functions}

In this section we shall briefly recall the action of an idele group on the field of meromorphic Siegel modular functions
due to Shimura.
\par
For a positive integer $g$ and a commutative ring $R$ with unity, we let
\begin{eqnarray*}
\mathrm{GSp}_{2g}(R)&=&\left\{\alpha\in\mathrm{GL}_{2g}(R)~|~\alpha^TJ\alpha=\nu J~
\textrm{for some}~\nu\in R^\times\right\}~\textrm{where}~J=\left[\begin{matrix}O&-I_g\\
I_g&O\end{matrix}\right],\\
\mathrm{Sp}_{2g}(R)&=&\left\{\alpha\in\mathrm{GL}_{2g}(R)~|~\alpha^TJ\alpha=J\right\}.
\end{eqnarray*}
Here, $\alpha^T$ stands for the transpose of the matrix $\alpha$.
Observe that the relation $\alpha^T J\alpha=\nu J$ implies $\det(\alpha)=\nu^g$
(\cite[(1.11)]{Shimura00}). 
If $\alpha$ belongs to either $\mathrm{GSp}_{2g}(R)$ or $\mathrm{Sp}_{2g}(R)$, then
$\alpha^T$ also belongs to the same group (\cite[p. 17]{Shimura00}).
\par
The symplectic group $\mathrm{Sp}_{2g}(\mathbb{Z})$ acts on the Siegel upper half-space
\begin{equation*}
\mathbb{H}_g=\left\{Z\in M_g (\mathbb{C})~|~Z^T=Z,~\mathrm{Im}(Z)~
\textrm{is positive definite}\right\}
\end{equation*}
by
\begin{equation}\label{fractional}
\gamma(Z)=
(AZ+B)(CZ+D)^{-1}\quad(\gamma\in\mathrm{Sp}_{2g}(\mathbb{Z}),~Z\in\mathbb{H}_g),
\end{equation}
where $A,B,C,D$ are $g\times g$ block matrices of $\gamma$
(\cite[Proposition 1 in $\S$1]{Klingen}).
For a positive integer $N$ let
\begin{equation*}
\Gamma(N)=\left\{\gamma\in\mathrm{Sp}_{2g}(\mathbb{Z})~|~\gamma\equiv I_{2g}
\Mod{N\cdot M_{2g}(\mathbb{Z})}\right\}.
\end{equation*}
We call a holomorphic function $f:\mathbb{H}_g\rightarrow\mathbb{C}$ a Siegel modular form
of weight $k$ and level $N$ if
\begin{itemize}
\item[\textrm{(M1)}] $f(\gamma(Z))=\det(CZ+D)^kf(Z)$ for every $\gamma=
\left[\begin{matrix}A&B\\C&D\end{matrix}\right]\in\Gamma(N)$,
\item[\textrm{(M2)}] $f$ is holomorphic at every cusp when $g=1$.
\end{itemize}
Every Siegel modular form $f$ can be expressed as
\begin{equation*}
f(Z)=\sum_\beta c(\beta)e(\mathrm{tr}(\beta Z)/N)\quad(c(\beta)\in\mathbb{C}),
\end{equation*}
where $\beta$ runs over all $g\times g$ positive semi-definite symmetric matrices over half integers with integer diagonal entries and $e(x)=e^{2\pi\mathrm{i}x}$ ($x\in\mathbb{R}$) (\cite[Theorem 1 in $\S$4]{Klingen}).
Here, we call $c(\beta)$ the Fourier coefficients of $f$.
For a subfield $F$ of $\mathbb{C}$ we set
\begin{eqnarray*}
\mathcal{M}_k(\Gamma(N),F)&=&
\textrm{the $F$-vector space of all Siegel modular forms of weight $k$ and level $N$}\\
&&\textrm{with Fourier coefficients in $F$},\\
\mathcal{M}_k(F)&=&\bigcup_{N=1}^\infty\mathcal{M}_k(\Gamma(N),F),\\
\mathcal{A}_0(\Gamma(N),F)&=&\textrm{the field of all meromorphic Siegel modular functions of the form~$g/h$,}\\
&&\textrm{with $g\in\mathcal{M}_k(F)$ and $h\in\mathcal{M}_k(F)\setminus\{0\}$ for some $k$,}\\
&&\textrm{which is invariant under the group $\Gamma(N)$},\\
\mathcal{A}_0(F)&=&\bigcup_{N=1}^\infty\mathcal{A}_0(\Gamma(N),F).
\end{eqnarray*}
In particular, let
\begin{eqnarray*}
\mathcal{F}_N&=&\mathcal{A}_0(\Gamma(N),\mathbb{Q}(\zeta_N)),\\
\mathcal{F}&=&A_0(\mathbb{Q}_\textrm{ab}),
\end{eqnarray*}
where $\zeta_N=e(1/N)$ and $K_\textrm{ab}$ denotes the maximal abelian extension of a number field $K$.
\par
On the other hand, we let
\begin{eqnarray*}
G&=&\mathrm{GSp}_{2g}(\mathbb{Q}),\\
G_+&=&\{\alpha\in G~|~\alpha^TJ\alpha=\nu J~\textrm{for some}~\nu>0\},\\
G_\mathbb{A}&=&\textrm{the adelization of $G$ with $G_0$ and $G_\infty$}\\
&&\textrm{the non-archimedean part and the archimedean part, respectively},\\
G_{\mathbb{A}+}&=&G_0G_{\infty+},~\textrm{where $G_{\infty+}$ is the identity component
of $G_\infty$}.
\end{eqnarray*}
Shimura presented in \cite[Theorem 8.10]{Shimura00} a group homomorphism
\begin{equation*}
\tau:G_\mathbb{A+}
\rightarrow\mathrm{Aut}(\mathcal{F})
\end{equation*}
satisfying the following properties: Let $f\in\mathcal{F}$.
\begin{itemize}
\item[\textup{(A1)}] $f^{\tau(\alpha)}=f\circ\alpha$ for all $\alpha\in G_+$, where $\alpha$ acts on $\mathbb{H}_g$ by the same way as in (\ref{fractional}).
\item[\textup{(A2)}] $f^{\tau(\iota(s))}=f^{[s,\mathbb{Q}]}$
for all $s\in\prod_p\mathbb{Z}_p^\times$,
where $\iota(s)=\left[\begin{matrix}I_g&O\\
O&s^{-1}I_g\end{matrix}\right]$.
Here, the action of $[s,\mathbb{Q}]$ on $f$ is understood as the
action of it on the Fourier coefficients of $f$ (see also \cite[Theorem 5]{Shimura75}).
\end{itemize}
Note that the mapping $s\mapsto[s,\mathbb{Q}]$
yields an isomorphism of $\prod_p\mathbb{Z}_p^\times$ onto
$\mathrm{Gal}(\mathbb{Q}_\textrm{ab}/\mathbb{Q})$
(\cite[$\S$8.1]{Shimura00}).
Then, $\mathcal{F}_N$ coincides with the fixed field of $\mathcal{F}$ by the subgroup
\begin{equation*}
\mathbb{Q}^\times\cdot\{\alpha\in G_{\mathbb{A}+}~|~\alpha_p\in\mathrm{GL}_{2g}(\mathbb{Z}_p)
~\textrm{and}~\alpha_p\equiv I_{2g}\Mod{N\cdot M_{2g}(\mathbb{Z}_p)}~
\textrm{for all rational primes $p$}\}
\end{equation*}
of $G_{\mathbb{A}+}$ (\cite[Theorem 3]{Shimura75} or \cite[Theorem 8.10 (6)]{Shimura00}).

\section {Siegel modular functions in terms of theta constants}

Let $g$ and $N$ be positive integers, and let $\mathbf{r},\mathbf{s}\in(1/N)\mathbb{Z}^{g\phantom{\big|}}$.
The (classical) theta constant $\theta(\left[\begin{smallmatrix}\mathbf{r}\\\mathbf{s}\end{smallmatrix}\right],Z)$
 is defined by
\begin{equation}\label{thetaconstant}
\theta(\left[\begin{smallmatrix}\mathbf{r}\\\mathbf{s}\end{smallmatrix}\right],Z)=
\sum_{\displaystyle\mathbf{n}\in\mathbb{Z}^{g}}e\left(
\frac{1}{2}(\mathbf{n}+\mathbf{r})^TZ(\mathbf{n}+\mathbf{r})+(\mathbf{n}+\mathbf{r})^T\mathbf{s}\right)
\quad(Z\in\mathbb{H}_g).
\end{equation}
\par
For a matrix $E\in M_g (\mathbb{Z})$ we mean by $\{E\}$
the $g$-vector whose components are the diagonal entries of $E$.

\begin{lemma}\label{Igusa}
We have the following properties of theta constants.
\begin{itemize}
\item[\textup{(i)}] $\theta(\left[\begin{smallmatrix}\mathbf{r}\\\mathbf{s}\end{smallmatrix}\right],Z)$ is
identically zero if and only if
$\mathbf{r},\mathbf{s}\in(1/2)\mathbb{Z}^{g\phantom{\big|}\hspace{-0.1cm}}$ and
$e(2\mathbf{r}^T\mathbf{s})=-1$.
\item[\textup{(ii)}]
$\theta(-\left[\begin{smallmatrix}\mathbf{r}\\\mathbf{s}\end{smallmatrix}\right],Z)=
\theta(\left[\begin{smallmatrix}\mathbf{r}\\\mathbf{s}\end{smallmatrix}\right],Z)$.
\item[\textup{(iii)}] If $\mathbf{a},\mathbf{b}\in\mathbb{Z}^{g\phantom{\big|}\hspace{-0.1cm}}$, then
$\theta(\left[\begin{smallmatrix}\mathbf{r}\\\mathbf{s}\end{smallmatrix}\right]
    +\left[\begin{smallmatrix}\mathbf{a}\\\mathbf{b}\end{smallmatrix}\right],Z)^N
    =\theta(\left[\begin{smallmatrix}\mathbf{r}\\\mathbf{s}\end{smallmatrix}\right],Z)^N$.
\item[\textup{(iv)}]
If $\gamma=\left[\begin{matrix}A&B\\C&D\end{matrix}\right]\in\mathrm{Sp}_{2g}(\mathbb{Z})$, then
\begin{equation*}
\theta(\left[\begin{smallmatrix}\mathbf{r}\\\mathbf{s}\end{smallmatrix}\right],\gamma(Z))^{4N}
=\kappa(\gamma)^{2N}\det(CZ+D)^{2N}
e\left(2N(\mathbf{r}^T\mathbf{s}-(\mathbf{r}')^T\mathbf{s}')\right)
\theta(\left[\begin{smallmatrix}\mathbf{r}'\\\mathbf{s}'\end{smallmatrix}\right]
+\tfrac{1}{2}\left[\begin{smallmatrix}\{A^TC\}\\\{B^TD\}\end{smallmatrix}\right],Z)^{4N},
\end{equation*}
where $\left[\begin{matrix}\mathbf{r}'\\\mathbf{s}'\end{matrix}\right]
=\gamma^T\left[\begin{matrix}\mathbf{r}\\\mathbf{s}\end{matrix}\right]$ and $\kappa(\gamma)$ is a $4$-th root of unity which depends only on $\gamma$.
\item[\textup{(v)}] The function $\theta(\left[\begin{smallmatrix}\mathbf{r}\\\mathbf{s}\end{smallmatrix}\right],Z)/
    \theta(\left[\begin{smallmatrix}\mathbf{0}\\\mathbf{0}\end{smallmatrix}\right],Z)$ belongs to $\mathcal{F}_{2N^2}$.
Furthermore, if $\alpha\in G_{\mathbb{A}+}\cap\prod_p\mathrm{GL}_{2g}(\mathbb{Z}_p)$ such that
$\alpha_p\equiv\left[\begin{matrix}I_g & O\\O&tI_g\end{matrix}\right]\Mod{2N^2\cdot
M_{2g}(\mathbb{Z}_p)}$ for all rational primes $p$ with a positive integer $t$, then
\begin{equation*}
\left(\frac{\theta(\left[\begin{smallmatrix}\mathbf{r}\\\mathbf{s}\end{smallmatrix}\right],Z)}
{\theta(\left[\begin{smallmatrix}\mathbf{0}\\\mathbf{0}\end{smallmatrix}\right],Z)}\right)^{\tau(\alpha)}=
\frac{\theta(\left[\begin{smallmatrix}\mathbf{r}\\t\mathbf{s}\end{smallmatrix}\right],Z)}
{\theta(\left[\begin{smallmatrix}\mathbf{0}\\\mathbf{0}\end{smallmatrix}\right],Z)}.
\end{equation*}
\end{itemize}
\end{lemma}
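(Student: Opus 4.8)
Throughout write $\vartheta_{\mathbf{r},\mathbf{s}}(Z)=\theta\bigl(\left[\begin{smallmatrix}\mathbf{r}\\\mathbf{s}\end{smallmatrix}\right],Z\bigr)$. The plan is to settle (ii), (iii) and one half of (i) by elementary manipulation of the series (\ref{thetaconstant}), to cite Igusa's \emph{Theta Functions} for the other half of (i) and for (iv), and to deduce (v) from (iii), (iv) together with the results of $\S3$. For (ii), replacing $\mathbf{n}$ by $-\mathbf{n}$ in $\vartheta_{-\mathbf{r},-\mathbf{s}}(Z)$ turns $\mathbf{n}-\mathbf{r}$ into $\mathbf{n}+\mathbf{r}$ and $(\mathbf{n}-\mathbf{r})^T(-\mathbf{s})$ into $(\mathbf{n}+\mathbf{r})^T\mathbf{s}$, giving back $\vartheta_{\mathbf{r},\mathbf{s}}(Z)$. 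For (iii), replacing $\mathbf{n}$ by $\mathbf{n}-\mathbf{a}$ in $\vartheta_{\mathbf{r}+\mathbf{a},\mathbf{s}+\mathbf{b}}(Z)$ and using $\mathbf{n}^T\mathbf{b}\in\mathbb{Z}$ yields the quasi-periodicity $\vartheta_{\mathbf{r}+\mathbf{a},\mathbf{s}+\mathbf{b}}(Z)=e(\mathbf{r}^T\mathbf{b})\,\vartheta_{\mathbf{r},\mathbf{s}}(Z)$ for $\mathbf{a},\mathbf{b}\in\mathbb{Z}^g$; since $N\mathbf{r}\in\mathbb{Z}^g$ the factor $e(\mathbf{r}^T\mathbf{b})$ is an $N$th root of unity, so the $N$th powers agree. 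Combining these: if $\mathbf{r},\mathbf{s}\in(1/2)\mathbb{Z}^g$, applying the quasi-periodicity to the \emph{integral} vectors $-2\mathbf{r},-2\mathbf{s}$ and then (ii) gives $\vartheta_{\mathbf{r},\mathbf{s}}(Z)=\vartheta_{-\mathbf{r},-\mathbf{s}}(Z)=e(-2\mathbf{r}^T\mathbf{s})\,\vartheta_{\mathbf{r},\mathbf{s}}(Z)$, forcing $\vartheta_{\mathbf{r},\mathbf{s}}\equiv0$ whenever $e(2\mathbf{r}^T\mathbf{s})=-1$; for the converse (non-vanishing for every other characteristic, via a nonzero leading Fourier term) and for (iv) — which follows by raising the classical functional equation of $\theta$ with characteristics, carrying a $\det(CZ+D)^{1/2}$ factor and an eighth-root-of-unity multiplier, to the $4N$th power — I would cite Igusa.

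For the first assertion of (v), I would write $\vartheta_{\mathbf{r},\mathbf{s}}/\vartheta_{\mathbf{0},\mathbf{0}}$ as the quotient $\bigl(\vartheta_{\mathbf{r},\mathbf{s}}\,\vartheta_{\mathbf{0},\mathbf{0}}^{\,4N-1}\bigr)\big/\vartheta_{\mathbf{0},\mathbf{0}}^{\,4N}$ of two products of $4N$ theta constants, each of weight $2N$. Reindexing $\mathbf{n}+\mathbf{r}=\mathbf{m}/N$ in (\ref{thetaconstant}) rewrites the $Z$-exponent as $\mathrm{tr}\bigl((\mathbf{m}\mathbf{m}^T)Z\bigr)/(2N^2)$ — with $\mathbf{m}\mathbf{m}^T$ positive semi-definite symmetric of integral diagonal — and the accompanying coefficient as $e(\mathbf{m}^T\mathbf{s}/N)\in\mathbb{Q}(\zeta_{2N^2})$; so (the eighth-root-of-unity theta multiplier being trivial on a deep enough congruence subgroup) numerator and denominator lie in $\mathcal{M}_{2N}(\mathbb{Q}(\zeta_{2N^2}))$, the denominator being nonzero since $\vartheta_{\mathbf{0},\mathbf{0}}\not\equiv0$ by (i). It then remains to check that $\vartheta_{\mathbf{r},\mathbf{s}}/\vartheta_{\mathbf{0},\mathbf{0}}$ is $\Gamma(2N^2)$-invariant. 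For $\gamma=\left[\begin{smallmatrix}A&B\\C&D\end{smallmatrix}\right]\in\Gamma(2N^2)$, applying (iv) to $\vartheta_{\mathbf{r},\mathbf{s}}$ and to $\vartheta_{\mathbf{0},\mathbf{0}}$ gives the common prefactor $\kappa(\gamma)^{2N}\det(CZ+D)^{2N}$, which cancels; the residual phase $e\bigl(2N(\mathbf{r}^T\mathbf{s}-(\mathbf{r}')^T\mathbf{s}')\bigr)$ is trivial because $A\equiv D\equiv I_g$, $B\equiv C\equiv O\pmod{2N^2}$ together with $\mathbf{r},\mathbf{s}\in(1/N)\mathbb{Z}^g$ force $\mathbf{r}'\equiv\mathbf{r}$, $\mathbf{s}'\equiv\mathbf{s}\pmod{2N\mathbb{Z}^g}$; and the characteristic shift $\tfrac12[\{A^TC\};\{B^TD\}]$ lies in $N^2\mathbb{Z}^{2g}$ since $A^TC,B^TD\equiv O\pmod{2N^2}$, so by (iii) it drops out of the $4N$th powers. (The passage from the $4N$th power of the ratio to the ratio itself is immediate from the un-powered form of Igusa's transformation formula, from which (iv) is extracted and which produces the same cancellations.) Hence $\vartheta_{\mathbf{r},\mathbf{s}}/\vartheta_{\mathbf{0},\mathbf{0}}\in\mathcal{A}_0(\Gamma(2N^2),\mathbb{Q}(\zeta_{2N^2}))=\mathcal{F}_{2N^2}$.

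For the idele action in (v): since $\alpha_p\in\mathrm{GL}_{2g}(\mathbb{Z}_p)$ and $\alpha_p\equiv\left[\begin{smallmatrix}I_g&O\\O&tI_g\end{smallmatrix}\right]\pmod{2N^2}$, reduction at each prime dividing $2N^2$ forces $\gcd(t,2N^2)=1$, so one may choose $s\in\prod_p\mathbb{Z}_p^\times$ with $s\equiv t^{-1}\pmod{2N^2}$. Then $\alpha\,\iota(s)^{-1}$ lies in $\{\beta\in G_{\mathbb{A}+}\mid\beta_p\in\mathrm{GL}_{2g}(\mathbb{Z}_p)\text{ and }\beta_p\equiv I_{2g}\Mod{2N^2\cdot M_{2g}(\mathbb{Z}_p)}\text{ for all }p\}$, which fixes $\mathcal{F}_{2N^2}$ by the description recalled in $\S3$; hence, by (A2), $\bigl(\vartheta_{\mathbf{r},\mathbf{s}}/\vartheta_{\mathbf{0},\mathbf{0}}\bigr)^{\tau(\alpha)}=\bigl(\vartheta_{\mathbf{r},\mathbf{s}}/\vartheta_{\mathbf{0},\mathbf{0}}\bigr)^{\tau(\iota(s))}=\bigl(\vartheta_{\mathbf{r},\mathbf{s}}/\vartheta_{\mathbf{0},\mathbf{0}}\bigr)^{[s,\mathbb{Q}]}$, the action on Fourier coefficients. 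Finally, $[s,\mathbb{Q}]$ raises each root of unity $\zeta$ occurring in those coefficients to $\zeta^{c}$ with $c\equiv t\pmod{2N^2}$ in Shimura's normalization of the Artin map — here one routes the inverse in $\iota(s)=\mathrm{diag}(I_g,s^{-1}I_g)$ through that normalization — so by the reindexed expansion it sends $e(\mathbf{m}^T\mathbf{s}/N)$ to $e(\mathbf{m}^T(t\mathbf{s})/N)$ while fixing the rational coefficients of $\vartheta_{\mathbf{0},\mathbf{0}}$, i.e.\ $\bigl(\vartheta_{\mathbf{r},\mathbf{s}}/\vartheta_{\mathbf{0},\mathbf{0}}\bigr)^{\tau(\alpha)}=\vartheta_{\mathbf{r},t\mathbf{s}}/\vartheta_{\mathbf{0},\mathbf{0}}$, which is (v).

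The hard part will be (v), for two intertwined reasons. First, making the level come out exactly $2N^2$: one must confirm that the eighth-root-of-unity theta multiplier together with the factor of $2$ contributed by the half-integer theta characteristics are absorbed precisely at level $2N^2$ (this is exactly where the congruences needed for the $\Gamma(2N^2)$-invariance of $\vartheta_{\mathbf{r},\mathbf{s}}/\vartheta_{\mathbf{0},\mathbf{0}}$ enter, and they are sensitive to the parity of $N$). Second, getting the reciprocity bookkeeping right, so that the action of $[s,\mathbb{Q}]$ on Fourier coefficients is $\mathbf{s}\mapsto t\mathbf{s}$ and not $\mathbf{s}\mapsto t^{-1}\mathbf{s}$ — which turns entirely on correctly matching the $s^{-1}$ in $\iota(s)=\mathrm{diag}(I_g,s^{-1}I_g)$ with Shimura's normalization of $s\mapsto[s,\mathbb{Q}]$.
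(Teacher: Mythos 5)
Your proposal is correct, but it does substantially more work than the paper, whose entire proof of this lemma is by citation: (i) is referred to Igusa, (iii), (iv) and (v) to Shimura's \emph{Theta functions with complex multiplication} (Proposition 1.3/1.7 and the identity on p.~676), and only (ii) is declared immediate from the definition. Your elementary verifications are all sound: the substitution $\mathbf{n}\mapsto-\mathbf{n}$ for (ii); the substitution $\mathbf{n}\mapsto\mathbf{n}-\mathbf{a}$ giving the quasi-periodicity factor $e(\mathbf{r}^T\mathbf{b})$, killed by the $N$th power, for (iii); and the derivation of the vanishing half of (i) by combining (ii) with quasi-periodicity applied to $-2\mathbf{r},-2\mathbf{s}$ is a nice touch that the paper does not record (it cites Igusa for both directions). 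Your treatment of (v) is essentially a reconstruction of Shimura's Proposition 1.7 rather than an independent route, and the two delicate points you flag are exactly the right ones; on the second (the direction of the Artin normalization), your conclusion that the Fourier coefficients transform by $\zeta\mapsto\zeta^{t}$ rather than $\zeta^{t^{-1}}$ is confirmed by the paper's own later use of the identity $e(1/N)^{\tau(\alpha)}=e(t/N)$ in the proof of Lemma \ref{bigtransf}~(iii). The only places where your argument remains a sketch rather than a proof are those where you yourself defer to Igusa or to the un-powered transformation formula — the nonvanishing half of (i), the formula (iv) with its eighth-root-of-unity multiplier, and the precise absorption of that multiplier at level $2N^2$ — and these are precisely the points the paper also outsources, so nothing essential is missing.
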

\begin{proof}
(i) See \cite[Theorem 2]{Igusa}.\\
(ii) This is immediate from the definition (\ref{thetaconstant}).\\
(iii) See \cite[p. 676 (13)]{Shimura76}.\\
(iv) See \cite[Proposition 1.3]{Shimura76} or \cite[Theta Transformation Formula 8.6.1]{B-L}.\\
(v) See \cite[Proposition 1.7]{Shimura76}.
\end{proof}

Let
\begin{eqnarray*}
S_-&=&\{\left[\begin{smallmatrix}\mathbf{a}\\\mathbf{b}\end{smallmatrix}\right]~|~
\mathbf{a},\mathbf{b}\in\{0,1/2\}^{g\phantom{\big|}\hspace{-0.1cm}}~\textrm{such that}~e(2\mathbf{a}^T\mathbf{b})=-1\},\\
S_+&=&\{\left[\begin{smallmatrix}\mathbf{c}\\\mathbf{d}\end{smallmatrix}\right]~|~
\mathbf{c},\mathbf{d}\in\{0,1/2\}^{g\phantom{\big|}\hspace{-0.1cm}}~\textrm{such that}~e(2\mathbf{c}^T\mathbf{d})=1)\}.
\end{eqnarray*}
By Lemma \ref{Igusa} (i) and (iv) one can regard each element $\gamma=\left[\begin{matrix}A&B\\C&D\end{matrix}\right]\in\mathrm{Sp}_{2g}(\mathbb{Z})$ as a permutation of the set $S_-$ (and $S_+$)
so that
\begin{equation*}
\left[\begin{matrix}
\mathbf{a}\\\mathbf{b}
\end{matrix}\right]\mapsto\gamma^T\left[\begin{matrix}
\mathbf{a}\\\mathbf{b}
\end{matrix}\right]+\frac{1}{2}\left[\begin{matrix}\{A^TC\}\\\{B^TD\}\end{matrix}\right]\Mod{\mathbb{Z}^{2g\phantom{\big|}\hspace{-0.1cm}}}.
\end{equation*}

\begin{definition}\label{bigdef}
We define a function
\begin{equation*}
\Theta(\left[\begin{smallmatrix}\mathbf{r}\\\mathbf{s}\end{smallmatrix}\right],Z)=
2^{4N}e\left(-2^gN(2^g-1)(2^g+1)\mathbf{r}^T\mathbf{s}\right)
\frac{\prod_{\left[\begin{smallmatrix}
 \mathbf{a}\\\mathbf{b}
\end{smallmatrix}\right]\in S_-}
\theta(\left[\begin{smallmatrix}\mathbf{a}\\\mathbf{b}\end{smallmatrix}\right]-
\left[\begin{smallmatrix}\mathbf{r}\\\mathbf{s}\end{smallmatrix}\right],Z)^{4N(2^g+1)}}
{\prod_{\left[\begin{smallmatrix}
 \mathbf{c}\\\mathbf{d}
\end{smallmatrix}\right]\in S_+}
\theta(\left[\begin{smallmatrix}\mathbf{c}\\\mathbf{d}\end{smallmatrix}\right],Z)^{4N(2^g-1)}}
\quad(Z\in\mathbb{H}_g).
\end{equation*}
\end{definition}

\begin{remark}\label{generalization}
\begin{itemize}
\item[(i)] One can easily check that 
\begin{equation*}
|S_-|=2^{g-1}(2^g-1)\quad\textrm{and}\quad
|S_+|=2^{g-1}(2^g+1).
\end{equation*}
Hence we have
\begin{equation*}
\mathrm{lcm}(|S_-|,|S_+|)=2^{g-1}(2^g-1)(2^g+1)=|S_-|(2^g+1)=|S_+|(2^g-1).
\end{equation*}
\item[(ii)] When $g=1$, let $N\geq2$ and $\left[\begin{matrix}r\\s\end{matrix}\right]
\in(1/N)\mathbb{Z}^2\setminus\mathbb{Z}^2$.
By using Jacobi's triple product identity (\cite[(17.3)]{Fine}) which reads
\begin{equation*}
\sum_{n\in\mathbb{Z}}a^n q^{n^2/2}=
\prod_{n=1}^\infty(1-q^{n})(1+aq^{n-1/2})(1+a^{-1}q^{n-1/2})\quad(a\in\mathbb{C}^\times),
\end{equation*}
one can justify
\begin{equation*}
\Theta(\left[\begin{smallmatrix}r\\s\end{smallmatrix}\right],\tau)
=g_{\left[\begin{smallmatrix}r\\s\end{smallmatrix}\right]}(\tau)^{12N}\quad(\tau\in\mathbb{H}).
\end{equation*}
This shows that the function in Definition \ref{bigdef} would be a multi-variable generalization of the Siegel function described in (\ref{Siegel}).
\end{itemize}
\end{remark}

\begin{lemma}\label{bigtransf}
We obtain the following transformation formulas for $\Theta(\left[\begin{smallmatrix}
\mathbf{r}\\\mathbf{s}\end{smallmatrix}\right],Z)$.
\begin{itemize}
\item[\textup{(i)}] $\Theta(\left[\begin{smallmatrix}\mathbf{a}\\\mathbf{b}\\\end{smallmatrix}\right]+
    \left[\begin{smallmatrix}\mathbf{r}\\\mathbf{s}\end{smallmatrix}\right],Z)=
    \Theta(\left[\begin{smallmatrix}\mathbf{r}\\\mathbf{s}\end{smallmatrix}\right],Z)$
    for all $\mathbf{a},\mathbf{b}\in\mathbb{Z}^{g\phantom{\big|}\hspace{-0.1cm}}$.
\item[\textup{(ii)}] $\Theta(\left[\begin{smallmatrix}\mathbf{r}\\\mathbf{s}\end{smallmatrix}\right],\gamma(Z))=
    \Theta(\gamma^T\left[\begin{smallmatrix}\mathbf{r}\\\mathbf{s}\end{smallmatrix}\right],Z)$
    for all $\gamma\in\mathrm{Sp}_{2g}(\mathbb{Z})$.
\item[\textup{(iii)}] If $\alpha\in G_{\mathbb{A}+}\cap\prod_p\mathrm{GL}_{2g}(\mathbb{Z}_p)$ for which
$\alpha_p\equiv\left[\begin{matrix}I_g & O\\O&tI_g\end{matrix}\right]\Mod{2N^2\cdot
M_{2g}(\mathbb{Z}_p)}$ for all rational primes $p$ with a positive integer $t$, then
$\Theta(\left[\begin{smallmatrix}
\mathbf{r}\\\mathbf{s}\end{smallmatrix}\right],Z)^{\tau(\alpha)}=
\Theta(\left[\begin{smallmatrix}
\mathbf{r}\\t\mathbf{s}\end{smallmatrix}\right],Z)$.
\end{itemize}
\end{lemma}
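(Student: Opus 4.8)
The plan is to verify the three identities by direct substitution into Definition \ref{bigdef}, applying Lemma \ref{Igusa} at each step. Throughout I write $M=2^g(2^g-1)(2^g+1)$, so the prefactor of $\Theta$ reads $2^{4N}e(-NM\,\mathbf{r}^T\mathbf{s})$, and I record the combinatorial identity $(2^g+1)|S_-|=(2^g-1)|S_+|=2^{g-1}(2^{2g}-1)=M/2$; this is precisely what will make the $\kappa(\gamma)$-, $\det(CZ+D)$- and $\theta(\left[\begin{smallmatrix}\mathbf{0}\\\mathbf{0}\end{smallmatrix}\right],Z)$-contributions cancel between numerator and denominator. Part (i) is then immediate: replacing $(\mathbf{r},\mathbf{s})$ by $(\mathbf{r}+\mathbf{a},\mathbf{s}+\mathbf{b})$ with $\mathbf{a},\mathbf{b}\in\mathbb{Z}^g$ leaves the denominator untouched, shifts each numerator characteristic by an integral vector (so the $4N(2^g+1)$-th powers are unchanged by Lemma \ref{Igusa}(iii)), and multiplies the prefactor by $e(-NM(\mathbf{a}^T\mathbf{b}+\mathbf{a}^T\mathbf{s}+\mathbf{r}^T\mathbf{b}))=1$, since $N\mathbf{r},N\mathbf{s}\in\mathbb{Z}^g$.

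For part (ii) I would fix $\gamma=\left[\begin{smallmatrix}A&B\\C&D\end{smallmatrix}\right]\in\mathrm{Sp}_{2g}(\mathbb{Z})$ and apply Lemma \ref{Igusa}(iv) to each theta factor of $\Theta(\left[\begin{smallmatrix}\mathbf{r}\\\mathbf{s}\end{smallmatrix}\right],\gamma(Z))$, raised to the $(2^g+1)$-st power in the numerator and the $(2^g-1)$-st power in the denominator (there the pertinent characteristic $\left[\begin{smallmatrix}\mathbf{u}\\\mathbf{v}\end{smallmatrix}\right]$ is $\left[\begin{smallmatrix}\mathbf{a}\\\mathbf{b}\end{smallmatrix}\right]-\left[\begin{smallmatrix}\mathbf{r}\\\mathbf{s}\end{smallmatrix}\right]$, resp.\ $\left[\begin{smallmatrix}\mathbf{c}\\\mathbf{d}\end{smallmatrix}\right]$). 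This produces: (a) powers of $\kappa(\gamma)$ and $\det(CZ+D)$ with exponent $2N(2^g+1)|S_-|$ from the numerator and $2N(2^g-1)|S_+|$ from the denominator, hence equal and cancelling in the quotient; (b) root-of-unity factors $e(2N(\mathbf{u}^T\mathbf{v}-(\mathbf{u}')^T\mathbf{v}'))$ with $\left[\begin{smallmatrix}\mathbf{u}'\\\mathbf{v}'\end{smallmatrix}\right]=\gamma^T\left[\begin{smallmatrix}\mathbf{u}\\\mathbf{v}\end{smallmatrix}\right]$; and (c) theta factors at $Z$ with characteristic $\gamma^T\left[\begin{smallmatrix}\mathbf{u}\\\mathbf{v}\end{smallmatrix}\right]+\tfrac12\left[\begin{smallmatrix}\{A^TC\}\\\{B^TD\}\end{smallmatrix}\right]$. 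For (c), in the numerator this characteristic equals $\bigl(\gamma^T\left[\begin{smallmatrix}\mathbf{a}\\\mathbf{b}\end{smallmatrix}\right]+\tfrac12\left[\begin{smallmatrix}\{A^TC\}\\\{B^TD\}\end{smallmatrix}\right]\bigr)-\gamma^T\left[\begin{smallmatrix}\mathbf{r}\\\mathbf{s}\end{smallmatrix}\right]$, and by the permutation of $S_-$ recalled just before Definition \ref{bigdef} the parenthesised term lies in $S_-$ modulo $\mathbb{Z}^{2g}$; hence Lemma \ref{Igusa}(iii) removes the integral part, and reindexing the product along this permutation turns the numerator theta-part into $\prod_{\left[\begin{smallmatrix}\mathbf{a}\\\mathbf{b}\end{smallmatrix}\right]\in S_-}\theta(\left[\begin{smallmatrix}\mathbf{a}\\\mathbf{b}\end{smallmatrix}\right]-\gamma^T\left[\begin{smallmatrix}\mathbf{r}\\\mathbf{s}\end{smallmatrix}\right],Z)^{4N(2^g+1)}$, i.e.\ exactly the numerator of $\Theta(\gamma^T\left[\begin{smallmatrix}\mathbf{r}\\\mathbf{s}\end{smallmatrix}\right],Z)$; the same argument with $S_+$ recovers its denominator.

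What then remains is to control the net root-of-unity factor from (b). From $\gamma J\gamma^T=J$ — in particular $AB^T,CD^T$ symmetric and $AD^T-BC^T=I$ — I would check the identity $\mathbf{u}^T\mathbf{v}-(\mathbf{u}')^T\mathbf{v}'=-\mathbf{w}^TP\mathbf{w}$ for $\mathbf{w}=\left[\begin{smallmatrix}\mathbf{u}\\\mathbf{v}\end{smallmatrix}\right]$, where $P=\left[\begin{smallmatrix}AB^T&BC^T\\CB^T&CD^T\end{smallmatrix}\right]$ is symmetric with entries in $\mathbb{Z}$. Write $Q(\mathbf{w})=\mathbf{w}^TP\mathbf{w}$, $\mathbf{t}=\left[\begin{smallmatrix}\mathbf{r}\\\mathbf{s}\end{smallmatrix}\right]$, $\left[\begin{smallmatrix}\widetilde{\mathbf{r}}\\\widetilde{\mathbf{s}}\end{smallmatrix}\right]=\gamma^T\mathbf{t}$, and let $\mathbf{e}$ run over the elements of $S_\mp$ viewed as vectors in $\{0,\tfrac12\}^{2g}$. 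Collecting everything, the quotient $\Theta(\left[\begin{smallmatrix}\mathbf{r}\\\mathbf{s}\end{smallmatrix}\right],\gamma(Z))/\Theta(\gamma^T\left[\begin{smallmatrix}\mathbf{r}\\\mathbf{s}\end{smallmatrix}\right],Z)$ comes out as the product of the three constant factors $e(-NM(\mathbf{r}^T\mathbf{s}-\widetilde{\mathbf{r}}^T\widetilde{\mathbf{s}}))$ (the prefactor discrepancy), $e(-2N(2^g+1)\sum_{\mathbf{e}\in S_-}Q(\mathbf{e}-\mathbf{t}))$ (numerator phases) and $e(2N(2^g-1)\sum_{\mathbf{e}\in S_+}Q(\mathbf{e}))$ (denominator phases). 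Using $\mathbf{r}^T\mathbf{s}-\widetilde{\mathbf{r}}^T\widetilde{\mathbf{s}}=-Q(\mathbf{t})$ (the identity with $\mathbf{w}=\mathbf{t}$), $Q(\mathbf{e}-\mathbf{t})=Q(\mathbf{e})-2\mathbf{e}^TP\mathbf{t}+Q(\mathbf{t})$, and $2(2^g+1)|S_-|=M$, all $Q(\mathbf{t})$-terms cancel and the quotient reduces to $e(E)$ with $E=4N(2^g+1)\,\mathbf{e}_0^{T}P\mathbf{t}-2N(2^g+1)q_-+2N(2^g-1)q_+$, where $\mathbf{e}_0=\sum_{\mathbf{e}\in S_-}\mathbf{e}$ and $q_\pm=\sum_{\mathbf{e}\in S_\pm}Q(\mathbf{e})$. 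The first summand lies in $\mathbb{Z}$ since $\mathbf{e}_0\in\tfrac12\mathbb{Z}^{2g}$ and $P\mathbf{t}\in\tfrac1N\mathbb{Z}^{2g}$ give $\mathbf{e}_0^{T}P\mathbf{t}\in\tfrac1{2N}\mathbb{Z}$. For the other two I would compute $N^{\pm}_{ij}=\#\{\mathbf{e}\in S_\pm:(2\mathbf{e})_i=(2\mathbf{e})_j=1\}$; elementary counting under the parity constraint defining $S_\pm$ shows these depend only on whether $i=j$, whether $\{i,j\}=\{p,p+g\}$ for some $p$, or neither. Substituting into $q_\pm=\tfrac14\sum_{i,j}P_{ij}N^{\pm}_{ij}$, one finds $-(2^g+1)q_-+(2^g-1)q_+=-2^{2g-4}(\mathrm{tr}\,P+\mathbf{1}^{T}P\mathbf{1})-2^{2g-2}\,\mathrm{tr}(BC^{T})$ with $\mathbf{1}$ the all-ones vector; since $\mathrm{tr}\,P+\mathbf{1}^{T}P\mathbf{1}=2\,\mathrm{tr}\,P+2\sum_{i<j}P_{ij}$ is even, multiplying by $2N$ gives an integer, so $E\in\mathbb{Z}$ and (ii) follows. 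I expect this last combinatorial cancellation to be the main obstacle.

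For part (iii) I would first use $(2^g+1)|S_-|=(2^g-1)|S_+|$ to rewrite $\Theta(\left[\begin{smallmatrix}\mathbf{r}\\\mathbf{s}\end{smallmatrix}\right],Z)$ with every theta divided by $\theta(\left[\begin{smallmatrix}\mathbf{0}\\\mathbf{0}\end{smallmatrix}\right],Z)$, exhibiting it as $2^{4N}e(-NM\,\mathbf{r}^T\mathbf{s})$ times a product of elements of $\mathcal{F}_{2N^2}$ (Lemma \ref{Igusa}(v)). Then I apply $\tau(\alpha)$: it fixes the rational number $2^{4N}$; it raises the root of unity $e(-NM\,\mathbf{r}^T\mathbf{s})\in\mathbb{Q}(\zeta_{2N^2})$ to its $t$-th power (the action on constants being governed by property (A2) and Shimura's normalization, consistent with the shift $\mathbf{v}\mapsto t\mathbf{v}$ in Lemma \ref{Igusa}(v)), where $\alpha_p\in\mathrm{GL}_{2g}(\mathbb{Z}_p)$ forces $\gcd(t,2N^2)=1$ and in particular $t$ odd; and by Lemma \ref{Igusa}(v) it sends each $\theta(\left[\begin{smallmatrix}\mathbf{u}\\\mathbf{v}\end{smallmatrix}\right],Z)/\theta(\left[\begin{smallmatrix}\mathbf{0}\\\mathbf{0}\end{smallmatrix}\right],Z)$ to $\theta(\left[\begin{smallmatrix}\mathbf{u}\\t\mathbf{v}\end{smallmatrix}\right],Z)/\theta(\left[\begin{smallmatrix}\mathbf{0}\\\mathbf{0}\end{smallmatrix}\right],Z)$. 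Because $t$ is odd, $t\mathbf{b}\equiv\mathbf{b}$ and $t\mathbf{d}\equiv\mathbf{d}\Mod{\mathbb{Z}^g}$ for the $\{0,\tfrac12\}^g$-parts coming from $S_\mp$, so Lemma \ref{Igusa}(iii) lets me replace $t\mathbf{b},t\mathbf{d}$ by $\mathbf{b},\mathbf{d}$ inside the $4N$-th powers; reassembling produces exactly $\Theta(\left[\begin{smallmatrix}\mathbf{r}\\t\mathbf{s}\end{smallmatrix}\right],Z)$, which is (iii).
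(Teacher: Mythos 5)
Your proposal is correct and follows essentially the same route as the paper: direct substitution into Definition \ref{bigdef}, cancellation of the $\kappa(\gamma)$- and $\det(CZ+D)$-powers via $(2^g+1)|S_-|=(2^g-1)|S_+|$, the permutation of $S_\mp$ induced by $\gamma$ together with Lemma \ref{Igusa} (iii) and (iv) for part (ii), and Lemma \ref{Igusa} (v) with the oddness of $t$ for part (iii). The only real divergence is in how the residual root of unity in (ii) is killed: the paper factors the quadratic phase, uses $S_+=\{0,1/2\}^{2g}\setminus S_-$ to merge the two products into a single product over $\{0,1/2\}^{2g}$, and collapses it to $\prod_{\mathbf{a}\in\{0,1/2\}^{g}}e\left(-2^{g+1}N\,\mathbf{a}^T(AB^T+CD^T)\mathbf{a}\right)=1$, whereas you expand $Q(\mathbf{e}-\mathbf{t})$ and count the incidences $N^{\pm}_{ij}$ explicitly; your computation is more laborious but arrives at the same integrality conclusion (it checks out for $g=1$), while the paper's pairing trick avoids the counting altogether.
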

\begin{proof}
(i) This is immediate from Lemma \ref{Igusa} (iii) and the Definition \ref{bigdef}.\\
(ii) Let $\gamma=\left[\begin{matrix}A&B\\C&D\end{matrix}\right]\in\mathrm{Sp}_{2g}(\mathbb{Z})$.
For $\mathbf{x},\mathbf{y}\in\mathbb{Q}^g$, we set $\left[\begin{matrix}\mathbf{x}'\\\mathbf{y}'\end{matrix}\right]
=\gamma^T\left[\begin{matrix}\mathbf{x}\\\mathbf{y}\end{matrix}\right]=\left[\begin{matrix}A^T\mathbf{x}+C^T\mathbf{y}\\ B^T\mathbf{x}+D^T\mathbf{y}\end{matrix}\right]$.
Here we observe that
\begin{eqnarray*}
&&\frac{\prod_{\left[\begin{smallmatrix}
 \mathbf{a}\\\mathbf{b}
\end{smallmatrix}\right]\in S_-}
e\left(2N(2^g+1)((\mathbf{a}-\mathbf{r})^T(\mathbf{b}-\mathbf{s})-(\mathbf{a}'-\mathbf{r}')^T(\mathbf{b}'-\mathbf{s}'))\right)
}
{\prod_{\left[\begin{smallmatrix}
 \mathbf{c}\\\mathbf{d}
\end{smallmatrix}\right]\in S_+}
e\left(2N(2^g-1)(\mathbf{c}^T\mathbf{d}-(\mathbf{c}')^T\mathbf{d}')\right)
}\\
&=&e\left(2^gN(2^g-1)(2^g+1)(\mathbf{r}^T\mathbf{s}-(\mathbf{r}')^T\mathbf{s}')\right)
\frac{\prod_{\left[\begin{smallmatrix}
 \mathbf{a}\\\mathbf{b}
\end{smallmatrix}\right]\in S_-}
e\left(2N(\mathbf{a}^T\mathbf{b}-(\mathbf{a}')^T\mathbf{b}')\right)
}
{\prod_{\left[\begin{smallmatrix}
 \mathbf{c}\\\mathbf{d}
\end{smallmatrix}\right]\in S_+}
e\left(-2N(\mathbf{c}^T\mathbf{d}-(\mathbf{c}')^T\mathbf{d}')\right)}
\\
&=&e\left(2^gN(2^g-1)(2^g+1)(\mathbf{r}^T\mathbf{s}-(\mathbf{r}')^T\mathbf{s}')\right)
\frac{\prod_{\left[\begin{smallmatrix}
 \mathbf{a}\\ \mathbf{b}
\end{smallmatrix}\right]\in S_-}
e\left(-2N(\mathbf{a}^TAB^T\mathbf{a}+\mathbf{b}^TCD^T\mathbf{b})\right)
}{\prod_{\left[\begin{smallmatrix}
 \mathbf{c}\\ \mathbf{d}
\end{smallmatrix}\right]\in S_+}
e\left(2N(\mathbf{c}^TAB^T\mathbf{c}+\mathbf{d}^TCD^T\mathbf{d})\right)}\\
&&\textrm{because $AD^T+BC^T=I_g+2BC^T$}\\
&=&e\left(2^gN(2^g-1)(2^g+1)(\mathbf{r}^T\mathbf{s}-(\mathbf{r}')^T\mathbf{s}')\right)
\prod_{\left[\begin{smallmatrix}
 \mathbf{a}\\ \mathbf{b}
\end{smallmatrix}\right]\in \{0,1/2\}^{2g}}
e\left(-2N(\mathbf{a}^TAB^T\mathbf{a}+\mathbf{b}^TCD^T\mathbf{b})\right)\\
&&\textrm{because $S_+=\{0,1/2\}^{2g}\setminus S_-$}\\
&=&e\left(2^gN(2^g-1)(2^g+1)(\mathbf{r}^T\mathbf{s}-(\mathbf{r}')^T\mathbf{s}')\right)
\prod_{ \mathbf{a}\in \{0,1/2\}^{g}}
e\left(-2^{g+1}N(\mathbf{a}^T(AB^T+CD^T)\mathbf{a})\right)\\
&=&e\left(2^gN(2^g-1)(2^g+1)(\mathbf{r}^T\mathbf{s}-(\mathbf{r}')^T\mathbf{s}')\right).
\end{eqnarray*}
Hence we derive
\begin{eqnarray*}
\Theta(\left[\begin{smallmatrix}\mathbf{r}\\\mathbf{s}\end{smallmatrix}\right],\gamma(Z))&=&
2^{4N}e\left(-2^gN(2^g-1)(2^g+1)\mathbf{r}^T\mathbf{s}\right)
\frac{\prod_{\left[\begin{smallmatrix}
 \mathbf{a}\\\mathbf{b}
\end{smallmatrix}\right]\in S_-}
\theta(\left[\begin{smallmatrix}\mathbf{a}\\\mathbf{b}\end{smallmatrix}\right]-
\left[\begin{smallmatrix}\mathbf{r}\\\mathbf{s}\end{smallmatrix}\right],\gamma(Z))^{4N(2^g+1)}}
{\prod_{\left[\begin{smallmatrix}
 \mathbf{c}\\\mathbf{d}
\end{smallmatrix}\right]\in S_+}
\theta(\left[\begin{smallmatrix}\mathbf{c}\\\mathbf{d}\end{smallmatrix}\right],\gamma(Z))^{4N(2^g-1)}}\\
&=&
2^{4N}e\left(-2^gN(2^g-1)(2^g+1)((\mathbf{r}')^T\mathbf{s}')\right)\\
&&\times
\frac{\prod_{\left[\begin{smallmatrix}
 \mathbf{a}\\\mathbf{b}
\end{smallmatrix}\right]\in S_-}
\theta(\gamma^T\left[\begin{smallmatrix}\mathbf{a}\\\mathbf{b}\end{smallmatrix}\right]
+\tfrac{1}{2}\left[\begin{smallmatrix}\{A^TC\}\\\{B^TD\}\end{smallmatrix}\right]
-\left[\begin{smallmatrix}
\mathbf{r}'\\\mathbf{s}'
\end{smallmatrix}\right],Z
)^{4N(2^g+1)}}
{\prod_{\left[\begin{smallmatrix}
 \mathbf{c}\\\mathbf{d}
\end{smallmatrix}\right]\in S_+}
\theta(\gamma^T\left[\begin{smallmatrix}\mathbf{c}\\\mathbf{d}\end{smallmatrix}\right]
+\tfrac{1}{2}\left[\begin{smallmatrix}\{A^TC\}\\\{B^TD\}\end{smallmatrix}\right],Z
)^{4N(2^g-1)}}\quad\textrm{by Lemma \ref{Igusa} (iv)}\\
&=&
2^{4N}e\left(-2^gN(2^g-1)(2^g+1)((\mathbf{r}')^T\mathbf{s}')\right)
\frac{ \prod_{\left[\begin{smallmatrix}
 \mathbf{a}\\\mathbf{b}
\end{smallmatrix}\right]\in S_-}
\theta(\left[\begin{smallmatrix}\mathbf{a}\\\mathbf{b}\end{smallmatrix}\right]-
\left[\begin{smallmatrix}\mathbf{r}'\\\mathbf{s}'\end{smallmatrix}\right],Z)^{4N(2^g+1)}}
{\prod_{\left[\begin{smallmatrix}
 \mathbf{c}\\\mathbf{d}
\end{smallmatrix}\right]\in S_+}
\theta(\left[\begin{smallmatrix}\mathbf{c}\\\mathbf{d}\end{smallmatrix}\right],Z)^{4N(2^g-1)}}\\
&&\textrm{by Lemma \ref{Igusa} (iii) and the fact that $\gamma$ is a permutation of $S_-$ (and $S_+$)}\\
&=&\Theta(\left[\begin{smallmatrix}\mathbf{r}'\\\mathbf{s}'\end{smallmatrix}\right],Z).
\end{eqnarray*}
(iii) Since $t$ is odd, $\left[\begin{matrix}\mathbf{a}\\
\mathbf{b}\end{matrix}\right]\mapsto
\left[\begin{matrix}\mathbf{a}\\
t\mathbf{b}\end{matrix}\right]\Mod{\mathbb{Z}^{2g\phantom{\big|}\hspace{-0.1cm}}}$ gives rise to a permutation of $S_-$ (and $S_+$). 
Furthermore, it follows from \cite[\S 8.1]{Shimura00} that
\begin{equation*}
e(1/N)^{\tau(\alpha)}=e(t/N).
\end{equation*}
Hence we see by Lemma \ref{Igusa} (v) that
 \begin{equation*}
 \begin{array}{rcl}
\Theta(\left[\begin{smallmatrix}\mathbf{r}\\\mathbf{s}\end{smallmatrix}\right],Z)^{\tau(\alpha)}&=&
2^{4N}e\left(-2^gtN(2^g-1)(2^g+1)\mathbf{r}^T\mathbf{s}\right)
\displaystyle\frac{ \prod_{\left[\begin{smallmatrix}
 \mathbf{a}\\\mathbf{b}
\end{smallmatrix}\right]\in S_-}
\theta(\left[\begin{smallmatrix}\mathbf{a}\\t\mathbf{b}\end{smallmatrix}\right]-
\left[\begin{smallmatrix}\mathbf{r}\\t\mathbf{s}\end{smallmatrix}\right],Z)^{4N(2^g+1)}}
{\prod_{\left[\begin{smallmatrix}
 \mathbf{c}\\\mathbf{d}
\end{smallmatrix}\right]\in S_+}
\theta(\left[\begin{smallmatrix}\mathbf{c}\\t\mathbf{d}\end{smallmatrix}\right],Z)^{4N(2^g-1)}}\vspace{0.1cm}\\
&=&
2^{4N}e\left(-2^gN(2^g-1)(2^g+1)\mathbf{r}^T(t\mathbf{s})\right)
\displaystyle\frac{ \prod_{\left[\begin{smallmatrix}
 \mathbf{a}\\\mathbf{b}
\end{smallmatrix}\right]\in S_-}
\theta(\left[\begin{smallmatrix}\mathbf{a}\\\mathbf{b}\end{smallmatrix}\right]-
\left[\begin{smallmatrix}\mathbf{r}\\t\mathbf{s}\end{smallmatrix}\right],Z)^{4N(2^g+1)}}
{\prod_{\left[\begin{smallmatrix}
 \mathbf{c}\\\mathbf{d}
\end{smallmatrix}\right]\in S_+}
\theta(\left[\begin{smallmatrix}\mathbf{c}\\\mathbf{d}\end{smallmatrix}\right],Z)^{4N(2^g-1)}}\vspace{0.1cm}\\
&&\textrm{by Lemma \ref{Igusa} (iii)}\\
&=&\Theta(\left[\begin{smallmatrix}\mathbf{r}\\t\mathbf{s}\end{smallmatrix}\right],Z).
\end{array}
\end{equation*}
\end{proof}

\begin{proposition}\label{bigtheta}
The function $\Theta(\left[\begin{smallmatrix}\mathbf{r}\\\mathbf{s}\end{smallmatrix}\right],Z)$
belongs to $\mathcal{F}_N$.
\end{proposition}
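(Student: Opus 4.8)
The plan is to invoke the description, recalled above, of $\mathcal{F}_N$ as the fixed field of $\mathcal{F}$ under the subgroup
\[
U_N=\mathbb{Q}^\times\cdot\{\alpha\in G_{\mathbb{A}+}~|~\alpha_p\in\mathrm{GL}_{2g}(\mathbb{Z}_p)~\text{and}~\alpha_p\equiv I_{2g}\Mod{N\cdot M_{2g}(\mathbb{Z}_p)}~\text{for all }p\}
\]
of $G_{\mathbb{A}+}$. So it suffices to prove (a) that $\Theta(\left[\begin{smallmatrix}\mathbf{r}\\\mathbf{s}\end{smallmatrix}\right],Z)\in\mathcal{F}$, and (b) that $\Theta(\left[\begin{smallmatrix}\mathbf{r}\\\mathbf{s}\end{smallmatrix}\right],Z)^{\tau(\alpha)}=\Theta(\left[\begin{smallmatrix}\mathbf{r}\\\mathbf{s}\end{smallmatrix}\right],Z)$ for every $\alpha\in U_N$.

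For (a) I would prove the stronger assertion $\Theta(\left[\begin{smallmatrix}\mathbf{r}\\\mathbf{s}\end{smallmatrix}\right],Z)\in\mathcal{F}_{8N^2}$. By the $\mathrm{lcm}$ identity in the Remark following Definition \ref{bigdef}, both the numerator and the denominator of the theta quotient in Definition \ref{bigdef} are products of the same number $D:=2^{g+1}N(2^g-1)(2^g+1)$ of theta factors counted with multiplicity. Dividing each of them by $\theta(\left[\begin{smallmatrix}\mathbf{0}\\\mathbf{0}\end{smallmatrix}\right],Z)^D$ rewrites $\Theta(\left[\begin{smallmatrix}\mathbf{r}\\\mathbf{s}\end{smallmatrix}\right],Z)$ as the constant $2^{4N}e(-2^gN(2^g-1)(2^g+1)\mathbf{r}^T\mathbf{s})$, which lies in $\mathbb{Q}(\zeta_N)$ since $\mathbf{r},\mathbf{s}\in(1/N)\mathbb{Z}^g$, times a ratio of products of functions $\theta(\left[\begin{smallmatrix}\mathbf{v}_1\\\mathbf{v}_2\end{smallmatrix}\right],Z)/\theta(\left[\begin{smallmatrix}\mathbf{0}\\\mathbf{0}\end{smallmatrix}\right],Z)$ with $\mathbf{v}_1,\mathbf{v}_2\in(1/2N)\mathbb{Z}^g$. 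Each of these ratios lies in $\mathcal{F}_{8N^2}$ by Lemma \ref{Igusa}(v) applied with $2N$ in place of $N$, while $\mathcal{F}_{8N^2}$ is a field containing $\mathbb{Q}(\zeta_N)$; hence $\Theta(\left[\begin{smallmatrix}\mathbf{r}\\\mathbf{s}\end{smallmatrix}\right],Z)\in\mathcal{F}_{8N^2}\subseteq\mathcal{F}$. (One checks on the side that the denominator $\prod_{\left[\begin{smallmatrix}\mathbf{c}\\\mathbf{d}\end{smallmatrix}\right]\in S_+}\theta(\left[\begin{smallmatrix}\mathbf{c}\\\mathbf{d}\end{smallmatrix}\right],Z)^{4N(2^g-1)}$ is not identically zero, by Lemma \ref{Igusa}(i) and the definition of $S_+$.)

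For (b), since $\tau$ is trivial on $\mathbb{Q}^\times$ and on $G_{\infty+}$, I may assume $\alpha\in G_0$ with $\alpha_p\in\mathrm{GL}_{2g}(\mathbb{Z}_p)$ and $\alpha_p\equiv I_{2g}\pmod N$ for all $p$. Put $s=\nu(\alpha)\in\prod_p\mathbb{Z}_p^\times$; reducing $\alpha_p^TJ\alpha_p=\nu(\alpha_p)J$ modulo $N$ gives $s\equiv1\pmod N$, and then $\gamma':=\alpha\iota(s)$ has similitude $1$, so $\gamma'\in\prod_p\mathrm{Sp}_{2g}(\mathbb{Z}_p)$, while $\gamma'\equiv I_{2g}\pmod N$; thus $\alpha=\gamma'\iota(s^{-1})$. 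Because $\tau$ is a homomorphism it suffices to see that $\Theta(\left[\begin{smallmatrix}\mathbf{r}\\\mathbf{s}\end{smallmatrix}\right],Z)$ is fixed by $\tau(\gamma')$ and by $\tau(\iota(s^{-1}))$ separately. For $\tau(\gamma')$: as $\Theta(\left[\begin{smallmatrix}\mathbf{r}\\\mathbf{s}\end{smallmatrix}\right],Z)$ already lies in $\mathcal{F}_{8N^2}$, the action of $\tau(\gamma')$ on it depends only on the image of $\gamma'$ in $\mathrm{Sp}_{2g}(\mathbb{Z}/8N^2\mathbb{Z})$, which lies in $\Gamma(N)/\Gamma(8N^2)$ and is therefore induced by some $\bar\gamma\in\Gamma(N)\subseteq\mathrm{Sp}_{2g}(\mathbb{Z})$; then (A1), Lemma \ref{bigtransf}(ii) and Lemma \ref{bigtransf}(i) give
\[
\Theta(\left[\begin{smallmatrix}\mathbf{r}\\\mathbf{s}\end{smallmatrix}\right],Z)^{\tau(\gamma')}
=\Theta(\left[\begin{smallmatrix}\mathbf{r}\\\mathbf{s}\end{smallmatrix}\right],\bar\gamma(Z))
=\Theta(\bar\gamma^T\left[\begin{smallmatrix}\mathbf{r}\\\mathbf{s}\end{smallmatrix}\right],Z)
=\Theta(\left[\begin{smallmatrix}\mathbf{r}\\\mathbf{s}\end{smallmatrix}\right],Z),
\]
the last equality because $(\bar\gamma^T-I_{2g})\left[\begin{smallmatrix}\mathbf{r}\\\mathbf{s}\end{smallmatrix}\right]\in\mathbb{Z}^{2g}$. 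For $\tau(\iota(s^{-1}))$: choose a positive integer $t$ with $\iota(s^{-1})_p\equiv\left[\begin{smallmatrix}I_g&O\\O&tI_g\end{smallmatrix}\right]\pmod{2N^2\cdot M_{2g}(\mathbb{Z}_p)}$ for all $p$---possible by the Chinese remainder theorem, and then automatically $t\equiv1\pmod N$---so that Lemma \ref{bigtransf}(iii) followed by Lemma \ref{bigtransf}(i) gives
\[
\Theta(\left[\begin{smallmatrix}\mathbf{r}\\\mathbf{s}\end{smallmatrix}\right],Z)^{\tau(\iota(s^{-1}))}
=\Theta(\left[\begin{smallmatrix}\mathbf{r}\\t\mathbf{s}\end{smallmatrix}\right],Z)
=\Theta(\left[\begin{smallmatrix}\mathbf{r}\\\mathbf{s}\end{smallmatrix}\right],Z),
\]
since $(t-1)\mathbf{s}\in\mathbb{Z}^g$. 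This proves (b), and with it the proposition.

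The step I expect to be the main obstacle is (b). Lemma \ref{bigtransf}(ii)--(iii) by themselves only describe the effect of $\mathrm{Sp}_{2g}(\mathbb{Z})$ and of the idelic elements $\iota(\cdot)$; it is the preliminary membership $\Theta(\left[\begin{smallmatrix}\mathbf{r}\\\mathbf{s}\end{smallmatrix}\right],Z)\in\mathcal{F}_{8N^2}$ obtained in (a) that forces the $\tau(\gamma')$-action to factor through the finite group $\mathrm{Sp}_{2g}(\mathbb{Z}/8N^2\mathbb{Z})$ and hence to be realized by an element of $\Gamma(N)$, and it is the elementary decomposition $\alpha=\gamma'\iota(s^{-1})$---which rests only on the fact that a matrix in $\mathrm{GL}_{2g}(\mathbb{Z}_p)$ of similitude $1$ is symplectic---that reduces an arbitrary $\alpha\in U_N$ to these two cases. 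The $\mathrm{lcm}$ bookkeeping in (a) and the congruence manipulations in (b) are routine.
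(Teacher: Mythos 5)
Your proof is correct, and its computational core --- the two invariance checks carried out with Lemma \ref{bigtransf}(i)--(iii) --- is the same as the paper's; the difference lies in how the conclusion is assembled. The paper verifies the definition $\mathcal{F}_N=\mathcal{A}_0(\Gamma(N),\mathbb{Q}(\zeta_N))$ directly: after placing $\Theta(\left[\begin{smallmatrix}\mathbf{r}\\\mathbf{s}\end{smallmatrix}\right],Z)$ in a bigger field $\mathcal{F}_{2N^2}$ via Lemma \ref{Igusa}(v), it checks invariance under an honest integral $\gamma\in\Gamma(N)$ acting on $\mathbb{H}_g$ (your $\gamma'$ case, with no lifting required), and then checks that the Fourier coefficients lie in $\mathbb{Q}(\zeta_N)$ by applying $[s,\mathbb{Q}]$ for $s$ fixing $\mathbb{Q}(\zeta_N)$ together with (A2) (your $\iota(s^{-1})$ case). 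You instead route everything through the adelic fixed-field characterization of $\mathcal{F}_N$, which obliges you to factor an arbitrary element of the subgroup you call $U_N$ as $\gamma'\iota(s^{-1})$ and to realize the symplectic factor by an element of $\Gamma(N)$ via the surjectivity of $\mathrm{Sp}_{2g}(\mathbb{Z})\rightarrow\mathrm{Sp}_{2g}(\mathbb{Z}/8N^2\mathbb{Z})$ --- a strong-approximation input the paper defers to the proof of Theorem \ref{main}. Your version is slightly longer but makes explicit why the two checks exhaust the adelic criterion; the paper's is shorter because the definition of $\mathcal{A}_0(\Gamma(N),\mathbb{Q}(\zeta_N))$ already separates the two conditions. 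Incidentally, your level bookkeeping (working in $\mathcal{F}_{8N^2}$, since the shifted characteristics lie in $(1/2N)\mathbb{Z}^{g}$, rather than the paper's asserted $\mathcal{F}_{2N^2}$) is the more careful of the two; the discrepancy is harmless for either argument.
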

\begin{proof}
By Lemma \ref{Igusa} (v), $\Theta(\left[\begin{smallmatrix}\mathbf{r}\\\mathbf{s}\end{smallmatrix}\right],Z)$ belongs
to $\mathcal{F}_{2N^2}$.
\par
For any $\gamma\in\Gamma(N)$ we achieve that
\begin{eqnarray*}
\Theta(\left[\begin{smallmatrix}\mathbf{r}\\\mathbf{s}\end{smallmatrix}\right],\gamma(Z))&=&
\Theta(\gamma^T\left[\begin{smallmatrix}\mathbf{r}\\\mathbf{s}\end{smallmatrix}\right],Z)\quad
\textrm{by Lemma \ref{bigtransf} (ii)}\\
&=&\Theta(\left[\begin{smallmatrix}\mathbf{r}\\\mathbf{s}\end{smallmatrix}\right],Z)\quad
\textrm{by the fact $\gamma^T\left[\begin{smallmatrix}\mathbf{r}\\\mathbf{s}\end{smallmatrix}\right]
\equiv\left[\begin{smallmatrix}\mathbf{r}\\\mathbf{s}\end{smallmatrix}\right]
\Mod{\mathbb{Z}^{2g\phantom{\big|}\hspace{-0.1cm}}}$ and Lemma \ref{bigtransf} (i)}.
\end{eqnarray*}
This claims that $\Theta(\left[\begin{smallmatrix}\mathbf{r}\\\mathbf{s}\end{smallmatrix}\right],Z)$
lies in $\mathcal{A}_0(\Gamma(N),\mathbb{Q}(\zeta_{2N^2}))$.
\par
Let $s$ be an element of $\prod_p\mathbb{Z}_p^\times$ such that
$[s,\mathbb{Q}]$ is the identity on $\mathbb{Q}(\zeta_N)$. Take
a positive integer $t$ for which
\begin{equation*}
\iota(s)=\left[\begin{matrix}
I_g&O\\
O&s^{-1}I_g
\end{matrix}\right]\equiv\left[\begin{matrix}I_g&O\\O&tI_g\end{matrix}\right]\Mod
{2N^2\cdot M_{2g}(\mathbb{Z}_p)}\quad\textrm{for all rational primes $p$}.
\end{equation*}
Since $s_p\equiv1\Mod{N\cdot\mathbb{Z}_p}$ for all rational primes $p$, we have $t\equiv1\Mod{N}$.
We then obtain
\begin{eqnarray*}
\Theta(\left[\begin{smallmatrix}\mathbf{r}\\\mathbf{s}\end{smallmatrix}\right],Z)^{[s,\mathbb{Q}]}
&=&\Theta(\left[\begin{smallmatrix}\mathbf{r}\\\mathbf{s}\end{smallmatrix}\right],Z)^{\tau(\iota(s))}
\quad\textrm{by (A2)}\\
&=&\Theta(\left[\begin{smallmatrix}\mathbf{r}\\t\mathbf{s}\end{smallmatrix}\right],Z)
\quad\textrm{by Lemma \ref{bigtransf} (iii)}\\
&=&\Theta(\left[\begin{smallmatrix}\mathbf{r}\\\mathbf{s}\end{smallmatrix}\right],Z)
\quad\textrm{by the fact $t\equiv1\Mod{N}$ and Lemma \ref{bigtransf} (i)}.
\end{eqnarray*}
This implies that every Fourier coefficient of
$\Theta(\left[\begin{smallmatrix}\mathbf{r}\\\mathbf{s}\end{smallmatrix}\right],Z)$ lies in $\mathbb{Q}(\zeta_N)$.
\par
Therefore, we conclude that $\Theta(\left[\begin{smallmatrix}\mathbf{r}\\\mathbf{s}\end{smallmatrix}\right],Z)$ belongs to $\mathcal{F}_N$.
\end{proof}

\section {Siegel invariants}\label{seccm}

Let $n$ be a positive integer and $K$ be a CM-field with $[K:\mathbb{Q}]=2n$,
that is, $K$ is a totally imaginary quadratic extension of a totally real number field.
Fix a set $\{\varphi_1,\ldots,\varphi_n\}$ of embeddings of $K$ into $\mathbb{C}$ 
such that $\varphi_1,\ldots,\varphi_n,\overline{\varphi}_1,\ldots,\overline{\varphi}_n$
are all the embeddings of $K$ into $\mathbb{C}$,
which is called a CM-type of $K$.
Take a finite Galois extension $L$ of $\mathbb{Q}$ containing $K$ and set 
\begin{eqnarray*}
S&=&\{\sigma\in\mathrm{Gal}(L/\mathbb{Q})~|~\sigma|_K=\varphi_i~\textrm{ for some $1\leq i\leq n$}\},\\
S^*&=&\{\sigma^{-1}~|~\sigma\in S\},\\
H^*&=&\{\gamma\in\mathrm{Gal}(L/\mathbb{Q})~|~\gamma S^*=S^*\}.
\end{eqnarray*}
Let $K^*$ be the subfield of $L$ corresponding to the subgroup $H^*$ of $\mathrm{Gal}(L/\mathbb{Q})$ and $\{\psi_1,\ldots,\psi_g\}$ be the set of all the embeddings of $K^*$ into $\mathbb{C}$ obtained by the elements of $S^*$.
Then 
\begin{equation*}
K^*=\mathbb{Q}\left(\sum_{i=1}^n a^{\varphi_i}~|~a\in K\right)
\end{equation*}
and it is also a CM-field with a (primitive) CM-type $\{\psi_1,\ldots,\psi_g\}$ (\cite[Proposition 28 in $\S$8.3]{Shimura98}).
We define an embedding
\begin{eqnarray*}
\Psi~:~K^*&\rightarrow&\mathbb{C}^{g\phantom{\big|}\hspace{-0.1cm}}\\
a&\mapsto&\left[\begin{matrix}
a^{\psi_1}\\
\vdots\\
a^{\psi_g}
\end{matrix}\right].
\end{eqnarray*}
For an element $c$ of $K^*$ which is purely imaginary,  define an $\mathbb{R}$-bilinear form
$E_c:\mathbb{C}^{g\phantom{\big|}\hspace{-0.1cm}}\times\mathbb{C}^{g\phantom{\big|}\hspace{-0.1cm}}\rightarrow\mathbb{R}$ as
\begin{equation*}
E_c(\mathbf{u},\mathbf{v})=\sum_{j=1}^g c^{\psi_j}(u_j\overline{v}_j-\overline{u}_jv_j)
\quad(\mathbf{u}=\left[\begin{matrix}u_1\\\vdots\\u_g\end{matrix}\right],
\mathbf{v}=\left[\begin{matrix}v_1\\\vdots\\v_g\end{matrix}\right]\in\mathbb{C}^{g\phantom{\big|}\hspace{-0.1cm}}).
\end{equation*}
Then we know that
\begin{equation*}
E_c(\Psi(a),\Psi(b))=\mathrm{Tr}_{K^*/\mathbb{Q}}(ca\overline{b})
\quad\textrm{for all}~a,b\in K^*.
\end{equation*}

\begin{assumption}\label{pp}
We assume the following two conditions:
\begin{itemize}
\item[(C1)] The complex torus $\mathbb{C}^{g\phantom{\big|}\hspace{-0.1cm}}/\Psi(\mathcal{O}_{K^*})$ can be given a structure of a principally polarized abelian variety.
\item[(C2)] $(K^*)^*=K$.
\end{itemize}
\end{assumption}

\begin{remark}
\begin{itemize}
\item[(i)] It is well known that a complex torus can be equipped with a structure of an abelian variety if and only if there is a non-degenerate Riemann form on the torus in the sense of \cite[$\S$3.1]{Shimura98}.  See also \cite[$\S$4.2]{B-L}.
\item[(ii)] The assumption (C1) is equivalent to saying that there is an element $\xi$ of $K^*$
satisfying the following properties:
\begin{itemize}
\item[(P1)] $\xi^{\psi_i}$ lies on the positive imaginary axis
for every $1\leq i\leq g$.
\item[(P2)] The map $E_\xi$ yields a Riemann form on $\mathbb{C}^{g\phantom{\big|}\hspace{-0.1cm}}/\Psi(\mathcal{O}_{K^*})$.
\item[(P3)] $\delta_{K^*}^{-1}=\xi\mathcal{O}_{K^*}$, where $\delta_{K^*}$ is
the different ideal of $K^*$.
\end{itemize}
See \cite[Theorem 4 in $\S$6.2]{Shimura98}.
In this case, we call the pair $(\mathbb{C}^{g\phantom{\big|}\hspace{-0.1cm}}/\Psi(\mathcal{O}_{K^*}),E_\xi)$
a principally polarized abelian variety.
If the narrow class number of the maximal real subfield $F$ of $K^*$ is one, then the assumption (C1) is always true. 
Indeed, one can choose an element $\zeta$ of $K^*$ such that $K^*=F(\zeta)$ and $\zeta^{\psi_j}$ lies on the positive imaginary axis for every $1\leq j\leq g$ (\cite[p. 43]{Shimura98}).
Note that the different ideal $\delta_{K^*/F}$ of $K^*$ over $F$ is generated by the elements $\alpha-\overline{\alpha}$ for $\alpha\in\mathcal{O}_{K^*}$.
Since $\overline{\zeta}=-\zeta$, we see that $\zeta(\alpha-\overline{\alpha})\in F$ for every $\alpha\in\mathcal{O}_{K^*}$.
Hence $\zeta^{-1}\delta_{K^*}^{-1}=(\zeta\delta_{K^*/F})^{-1}\delta_{F}^{-1}$ is generated by an ideal of $F$.
Since the narrow class number of $F$ is one, $\zeta^{-1}\delta_{K^*}^{-1}=x\mathcal{O}_{K^*}$ for some totally positive element $x$ of $F^\times$.
Then $\xi=x\zeta$ satisfies (P1)$\sim$(P3).

\item[(iii)] The assumption (C2) holds if and only if $(K;\{\varphi_i\}_{i=1}^n)$ is a primitive CM-type, that is, the abelian varieties of this CM-type are simple (\cite[$\S$8.2, Proposition 26]{Shimura98}).
\item[(iv)] Throughout this paper, we fix an element $\xi$ of $K^*$ satisfying (P1)$\sim$(P3) so that $(\mathbb{C}^{g\phantom{\big|}\hspace{-0.1cm}}/\Psi(\mathcal{O}_{K^*}),E_\xi)$ becomes a principally polarized abelian variety.
\end{itemize}
\end{remark}

By Assumption \ref{pp} (C2), one can define a group homomorphism
\begin{equation*}
\begin{array}{cccc}
\varphi:&K^\times&\rightarrow& (K^*)^\times\\
&a&\mapsto&\displaystyle\prod_{i=1}^n a^{\varphi_i},
\end{array}
\end{equation*}
and extend it naturally
to a homomorphism of idele groups
$\varphi:K_\mathbb{A}^\times\rightarrow (K^*)_\mathbb{A}^\times$.
It is also known that for a fractional ideal $\mathfrak{a}$ of $K$ there
exists a fractional ideal $\varphi(\mathfrak{a})$ of $K^*$ such that
\begin{equation}\label{reflex norm}
\varphi(\mathfrak{a})\mathcal{O}_L=\prod_{i=1}^n(\mathfrak{a}\mathcal{O}_L)^{\varphi_i}
\end{equation}
(\cite[Proposition 29 in $\S$8.3]{Shimura98}).
\par
For a number field $F$ and a nonzero integral ideal $\mathfrak{a}$ of $F$ let $\mathcal{N}_F(\mathfrak{a})$
be the absolute norm of $\mathfrak{a}$, namely, $\mathcal{N}_F(\mathfrak{a})=|\mathcal{O}_F/\mathfrak{a}|$
(so, $\mathrm{N}_{F/\mathbb{Q}}(\mathfrak{a})=\mathcal{N}_F(\mathfrak{a})\mathbb{Z}$).
In general, for a fractional ideal $\mathfrak{b}$ of $F$ with
prime ideal factorization $\mathfrak{b}=\prod_{\mathfrak{p}}\mathfrak{p}^{e_\mathfrak{p}}$
we define
$\mathcal{N}_F(\mathfrak{b})=\prod_\mathfrak{p}\mathcal{N}_F(\mathfrak{p})^{e_\mathfrak{p}}$.
Furthermore, let
$D_{F/\mathbb{Q}}(\mathfrak{b})$ be the discriminant ideal of $\mathfrak{b}$ and
$d_{F/\mathbb{Q}}(\mathfrak{b})$ be its positive generator in $\mathbb{Q}$.
We then have the relation
\begin{equation*}
d_{F/\mathbb{Q}}(\mathfrak{b})=\mathcal{N}_F(\mathfrak{b})^2d_{F/\mathbb{Q}}(\mathcal{O}_F)
\end{equation*}
(\cite[Proposition 13 in Chapter III]{Lang86}).
\par
Let $K_0$ be the fixed field of $L$ by the subgroup
\begin{equation*}
\left\langle\sigma\in\mathrm{Gal}(L/\mathbb{Q})~|~\sigma|_K=\varphi_i~\textrm{for some}~i
\right\rangle
\end{equation*}
of $\mathrm{Gal}(L/\mathbb{Q})$. 
One can readily check that $K_0$ becomes either
an imaginary quadratic subfield of $K$ and $K^*$, or $\mathbb{Q}$.
In particular, we see from the assumption (C2) that $K_0=\mathbb{Q}$ when $g\geq 2$ (\cite[Remark (1) in p. 213]{Mumford} or \cite[Theorem 3 in $\S$6.2]{Shimura98}).
\par

From now on, we let $\mathfrak{f}=\mathfrak{f}_0\mathcal{O}_K$ for
a proper nontrivial ideal $\mathfrak{f}_0$ of $\mathcal{O}_{K_0}$.
Let $C$ be a given ray class in $\mathrm{Cl}(\mathfrak{f})$.
For an integral ideal $\mathfrak{c}$ in $C$ we set
\begin{equation*}
m_\mathfrak{c}=\sqrt[g]{\mathcal{N}_{K^*}((\mathfrak{f}^*)^{-1}\varphi(\mathfrak{c}))},
\end{equation*}
where $\mathfrak{f}^*=\mathfrak{f}_0\mathcal{O}_{K^*}$.
Let $d_0=2/[K_0:\mathbb{Q}]$.
Then we get 
\begin{equation}\label{mc}
\begin{array}{ccl}
m_\mathfrak{c}&=&\sqrt[g]{\mathcal{N}_{K^*}(\mathfrak{f}^*\overline{\mathfrak{f}^*})^{-1/2}\mathcal{N}_{K^*}(\varphi(\mathfrak{c})\overline{\varphi(\mathfrak{c})})^{1/2}}\vspace{0.1cm}\\
&=&\sqrt[g]{\mathcal{N}_{K^*}(\mathcal{N}_{K_0}(\mathfrak{f}_0)^{d_0}\mathcal{O}_{K^*})^{-1/2}
\mathcal{N}_{K^*}(\mathcal{N}_{K}(\mathfrak{c}))^{1/2}}\vspace{0.1cm}\\
&=&\mathcal{N}_{K_0}(\mathfrak{f}_0)^{-d_0}\mathcal{N}_K(\mathfrak{c}).
\end{array}
\end{equation}
Since $\mathfrak{f}^*\overline{\mathfrak{f}^*}=\mathcal{N}_{K_0}(\mathfrak{f}_0)^{d_0}\mathcal{O}_{K^*}$, one can  deduce from \cite[Lemma 5.3]{K-S-Y} that 
\begin{equation*}
\mathcal{P}_\mathfrak{c}=(\mathbb{C}^{g\phantom{\big|}\hspace{-0.1cm}}/\Psi(\mathfrak{f}^*\varphi(\mathfrak{c})^{-1}),E_{\xi m_\mathfrak{c}})
\end{equation*}
is also a principally polarized abelian variety.
Let $\{\mathbf{b}_1,\ldots,\mathbf{b}_{2g}\}$ be a symplectic basis of $\mathcal{P}_\mathfrak{c}$,
and let $y_1,\ldots,y_{2g}$ be elements of $\mathfrak{f}^*\varphi(\mathfrak{c})^{-1}$
satisfying $\mathbf{b}_j=\Psi(y_j)$ ($1\leq j\leq 2g$).
As is well known (\cite[Proposition 8.1.1]{B-L}), the $g\times g$ matrix
\begin{equation*}
Z_0^*=\left[\begin{matrix}\mathbf{b}_{g+1} & \cdots & \mathbf{b}_{2g}\end{matrix}\right]^{-1}
\left[\begin{matrix}\mathbf{b}_1 & \cdots & \mathbf{b}_g\end{matrix}\right]
\end{equation*}
belongs to $\mathbb{H}_g$, which we call a CM-point.
Since the smallest positive integer $N$ in $\mathfrak{f}=\mathfrak{f}_0\mathcal{O}_K$
also belongs to $\mathfrak{f}^*\varphi(\mathfrak{c})^{-1}=\mathfrak{f}_0\varphi(\mathfrak{c})^{-1}$, we can
express $N$ as
\begin{equation}\label{expressN}
N=\sum_{j=1}^{2g}r_j y_j
\quad\textrm{for some unique integers}~r_1,\ldots,r_{2g}.
\end{equation}

\begin{definition}\label{defSiegel}
We define the Siegel invariant $\Theta_\mathfrak{f}(C)$ modulo
$\mathfrak{f}$ at $C$ by
\begin{equation*}
\Theta_\mathfrak{f}(C)=\Theta(\left[\begin{matrix}r_1/N\\
\vdots\\
r_{2g}/N\end{matrix}\right],Z_0^*).
\end{equation*}
\end{definition}

\begin{remark}
When $g=1$,  Assumption \ref{pp} always holds and $\Theta_\mathfrak{f}(C)$ becomes the Siegel-Ramachandra invariant modulo $\mathfrak{f}$ at $C$ described in (\ref{S-R}).  

\end{remark}

This invariant $\Theta_\mathfrak{f}(C)$ is well defined, independent of the choices of
a symplectic basis of $\mathcal{P}_\mathfrak{c}$ and an integral ideal $\mathfrak{c}$ in $C$ as follows:

\begin{proposition}\label{indep2}
$\Theta_\mathfrak{f}(C)$ does not depend on the choice of a symplectic basis
 of $\mathcal{P}_\mathfrak{c}$.
\end{proposition}
\begin{proof}
Let $\{\widetilde{\mathbf{b}}_1,\ldots,\widetilde{\mathbf{b}}_{2g}\}$ be another symplectic basis of
$\mathcal{P}_\mathfrak{c}$ and let $\widetilde{Z}_0^*$ be the associated CM-point in $\mathbb{H}_g$.
Then we have
\begin{equation}\label{bbb}
\left[\begin{matrix}
\widetilde{\mathbf{b}}_1 & \cdots & \widetilde{\mathbf{b}}_{2g}
\end{matrix}\right]=
\left[\begin{matrix}
\mathbf{b}_1 & \cdots & \mathbf{b}_{2g}
\end{matrix}\right]\beta
\quad\textrm{for some}~
\beta=\left[\begin{matrix}A&B\\C&D\end{matrix}\right]\in\mathrm{Sp}_{2g}(\mathbb{Z}),
\end{equation}
and
\begin{equation}\label{ZZ}
\widetilde{Z}_0^*=\beta^T(Z_0^*)
\end{equation} (\cite[Proposition 6.1]{K-S-Y}).
\par

Let $\widetilde{y}_1,\ldots,\widetilde{y}_{2g}$ be elements of
$\mathfrak{f}^*\varphi(\mathfrak{c})^{-1}$ such that
$\widetilde{\mathbf{b}}_j=\Psi(\widetilde{y}_j)$ ($1\leq j\leq 2g$).
Together with (\ref{expressN}) we can express $N$ as
\begin{equation*}
N=\sum_{j=1}^{2g}r_jy_j=
\sum_{j=1}^{2g}\widetilde{r}_j\widetilde{y}_j\quad
\textrm{for some unique integers}~\widetilde{r}_1,\ldots, \widetilde{r}_{2g}.
\end{equation*}
Taking $\Psi$ we achieve by (\ref{bbb}) that
\begin{equation*}
\Psi(N)=\left[\begin{matrix}
\mathbf{b}_1 & \cdots & \mathbf{b}_{2g}
\end{matrix}\right]\left[\begin{matrix}r_1\\\vdots\\r_{2g}\end{matrix}\right]
=\left[\begin{matrix}
\widetilde{\mathbf{b}}_1 & \cdots & \widetilde{\mathbf{b}}_{2g}
\end{matrix}\right]\left[\begin{matrix}\widetilde{r}_1\\\vdots\\\widetilde{r}_{2g}\end{matrix}\right]
=\left[\begin{matrix}
\mathbf{b}_1 & \cdots & \mathbf{b}_{2g}
\end{matrix}\right]\beta\left[\begin{matrix}\widetilde{r}_1\\\vdots\\\widetilde{r}_{2g}\end{matrix}\right],
\end{equation*}
from which we have
\begin{equation}\label{rbr}
\left[\begin{matrix}
\widetilde{r}_1\\\vdots\\\widetilde{r}_{2g}
\end{matrix}\right]=
\beta^{-1}\left[\begin{matrix}
r_1\\\vdots\\r_{2g}
\end{matrix}\right].
\end{equation}
We then derive  that
\begin{eqnarray*}
\Theta(\left[\begin{matrix}\widetilde{r}_1/N\\\vdots\\
\widetilde{r}_{2g}/N\end{matrix}\right],\widetilde{Z}_0^*)&=&
\Theta(\beta^{-1}\left[\begin{matrix}r_1/N\\\vdots\\
r_{2g}/N\end{matrix}\right],\beta^T(Z_0^*))\quad
\textrm{by (\ref{ZZ}) and (\ref{rbr})}\\
&=&\Theta(\beta\beta^{-1}\left[\begin{matrix}r_1/N\\\vdots\\
r_{2g}/N\end{matrix}\right],Z_0^*)\quad\textrm{by
the fact $\beta^T\in\mathrm{Sp}_{2g}(\mathbb{Z})$ and Lemma \ref{bigtransf} (ii)}\\
&=&\Theta(\left[\begin{matrix}r_1/N\\\vdots\\
r_{2g}/N\end{matrix}\right],Z_0^*).
\end{eqnarray*}
This completes the proof.
\end{proof}

\begin{proposition}\label{indep1}
$\Theta_\mathfrak{f}(C)$ does not depend on the choice of an integral ideal $\mathfrak{c}$ in $C$.
\end{proposition}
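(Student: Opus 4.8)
The plan is to reduce the statement, exactly as in the proof of Proposition~\ref{indep2}, to the isomorphism invariance of $\Theta$, the new ingredient being that rescaling $\Psi(\mathfrak{f}^*\varphi(\mathfrak{c})^{-1})$ by the reflex of an element $\mu\equiv1\Mod{\mathfrak{f}}$ produces another principally polarized abelian variety carrying the same polarization. Let $\mathfrak{c},\mathfrak{c}'$ be two integral ideals in $C$. Since $C$ is a ray class modulo $\mathfrak{f}$ and $K$ is totally imaginary, both are prime to $\mathfrak{f}$ and $\mathfrak{c}'=\mu\mathfrak{c}$ for some $\mu\in K^\times$ with $\mu\equiv1\Mod{\mathfrak{f}}$. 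Then $\varphi(\mathfrak{c}')=\varphi(\mu)\varphi(\mathfrak{c})$ and $\mathfrak{f}^*\varphi(\mathfrak{c}')^{-1}=\varphi(\mu)^{-1}\cdot\mathfrak{f}^*\varphi(\mathfrak{c})^{-1}$, so the diagonal matrix $T=\mathrm{diag}(\varphi(\mu)^{-\psi_1},\ldots,\varphi(\mu)^{-\psi_g})\in\mathrm{GL}_g(\mathbb{C})$ satisfies $T\Psi(z)=\Psi(\varphi(\mu)^{-1}z)$ for $z\in K^*$ and carries $\Psi(\mathfrak{f}^*\varphi(\mathfrak{c})^{-1})$ isomorphically onto $\Psi(\mathfrak{f}^*\varphi(\mathfrak{c}')^{-1})$.

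The first key point is that $T$ is an isomorphism of principally polarized abelian varieties $(\mathbb{C}^g/\Psi(\mathfrak{f}^*\varphi(\mathfrak{c})^{-1}),E_{\xi m_\mathfrak{c}})\to(\mathbb{C}^g/\Psi(\mathfrak{f}^*\varphi(\mathfrak{c}')^{-1}),E_{\xi m_{\mathfrak{c}'}})$. Indeed, from the standard reflex identity $\varphi(\mu)\overline{\varphi(\mu)}=\mathrm{N}_{K/\mathbb{Q}}(\mu)$ and the fact that $\mathrm{N}_{K/\mathbb{Q}}(\mu)\in\mathbb{Q}_{>0}$ (because $K$ is a CM-field) one gets $|\varphi(\mu)^{\psi_j}|^2=\mathrm{N}_{K/\mathbb{Q}}(\mu)$ for every $j$, while by (\ref{mc}) one has $m_{\mathfrak{c}'}=\mathrm{N}_{K/\mathbb{Q}}(\mu)\,m_\mathfrak{c}$; plugging this into the definition of $E_c$ gives $E_{\xi m_{\mathfrak{c}'}}(T\mathbf{u},T\mathbf{v})=E_{\xi m_\mathfrak{c}}(\mathbf{u},\mathbf{v})$ for all $\mathbf{u},\mathbf{v}$. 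Consequently, starting from a symplectic basis $\{\mathbf{b}_1,\ldots,\mathbf{b}_{2g}\}$ of $(\mathbb{C}^g/\Psi(\mathfrak{f}^*\varphi(\mathfrak{c})^{-1}),E_{\xi m_\mathfrak{c}})$ with $\mathbf{b}_j=\Psi(y_j)$ and $N=\sum_j r_jy_j$ as in Definition~\ref{defSiegel}, the set $\{T\mathbf{b}_1,\ldots,T\mathbf{b}_{2g}\}$ is a symplectic basis of $(\mathbb{C}^g/\Psi(\mathfrak{f}^*\varphi(\mathfrak{c}')^{-1}),E_{\xi m_{\mathfrak{c}'}})$, and by Proposition~\ref{indep2} (applied to $\mathfrak{c}'$) we may use it to compute $\Theta_\mathfrak{f}(C)$ from the ideal $\mathfrak{c}'$. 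Writing $T\mathbf{b}_j=\Psi(y_j')$ with $y_j'=\varphi(\mu)^{-1}y_j\in\mathfrak{f}^*\varphi(\mathfrak{c}')^{-1}$, the associated CM-point is unchanged: both $g\times g$ blocks of $\bigl[\,T\mathbf{b}_1\ \cdots\ T\mathbf{b}_{2g}\,\bigr]$ acquire the left factor $T$, which cancels in the formula for the CM-point, so $\widetilde{Z}_0^*=Z_0^*$.

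It remains to compare the integer vectors. From $N=\sum_j r_j'y_j'=\sum_j r_j'\varphi(\mu)^{-1}y_j$ we get $\varphi(\mu)N=\sum_j r_j'y_j$, hence $(\varphi(\mu)-1)N=\sum_j(r_j'-r_j)y_j$; as $\{y_j\}$ is a $\mathbb{Z}$-basis of $\mathfrak{f}^*\varphi(\mathfrak{c})^{-1}$, we obtain $r_j'\equiv r_j\Mod{N}$ for all $j$ once we know $\varphi(\mu)-1\in\mathfrak{f}^*\varphi(\mathfrak{c})^{-1}$. Since $\varphi(\mathfrak{c})$ is prime to $\mathfrak{f}^*=\mathfrak{f}_0\mathcal{O}_{K^*}$ and $\varphi(\mathfrak{c}')$ is an integral ideal, a short localization argument reduces this to the congruence $\varphi(\mu)\equiv1\Mod{\mathfrak{f}^*}$, which I regard as the crux of the proof. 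To prove it I would work at the primes $\mathfrak{P}$ of $L$ above $\mathfrak{f}_0$: there $\mu$ is a unit with $\mu\equiv1\Mod{\mathfrak{f}_0}$, hence so is each conjugate $\mu^{\sigma_i}$ (here one uses $\mathfrak{f}_0^{\sigma_i}=\mathfrak{f}_0$, which holds because $K_0=\mathbb{Q}$ when $g\geq2$ so $\mathfrak{f}_0$ is generated by a rational integer, while $K=K^*=K_0$ and $\varphi=\mathrm{id}$ when $g=1$), hence $\varphi(\mu)=\prod_i\mu^{\sigma_i}\equiv1\Mod{\mathfrak{f}_0}$ at $\mathfrak{P}$; as this holds at every $\mathfrak{P}\mid\mathfrak{f}_0\mathcal{O}_L$ and $\varphi(\mu)\in K^*$, we conclude $\varphi(\mu)\equiv1\Mod{\mathfrak{f}^*}$. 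Finally, combining $\widetilde{Z}_0^*=Z_0^*$, the congruences $r_j'\equiv r_j\Mod{N}$, Lemma~\ref{bigtransf}~(i) and Proposition~\ref{indep2} shows that $\Theta_\mathfrak{f}(C)$ computed from $\mathfrak{c}'$ agrees with the one computed from $\mathfrak{c}$, as desired. The only genuinely new input beyond the bookkeeping already carried out in Propositions~\ref{cpp} and~\ref{indep2} is the behaviour of the reflex map on congruences, namely $\varphi(\mu)\equiv1\Mod{\mathfrak{f}^*}$ for $\mu\equiv1\Mod{\mathfrak{f}}$, together with the norm identity $\varphi(\mu)\overline{\varphi(\mu)}=\mathrm{N}_{K/\mathbb{Q}}(\mu)$.
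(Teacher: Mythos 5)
Your proposal is correct and follows essentially the same route as the paper: rescale the lattice by the diagonal matrix of $\varphi(\nu)^{-1}$, check that the polarization and the CM-point are preserved via $\varphi(\nu)\overline{\varphi(\nu)}=\mathrm{N}_{K/\mathbb{Q}}(\nu)$ and $m_{\mathfrak{c}'}=\mathrm{N}_{K/\mathbb{Q}}(\nu)m_{\mathfrak{c}}$, and reduce the change of coefficient vector to $\varphi(\nu)-1\in\mathfrak{f}^*\varphi(\mathfrak{c})^{-1}$. The only (immaterial) difference is that you establish this last congruence by localizing at the primes of $L$ above $\mathfrak{f}_0$, whereas the paper writes $\nu=1+x$ with $x\in\mathfrak{f}\mathfrak{c}^{-1}$ and expands $\prod_i(1+x)^{\varphi_i}$ directly.
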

\begin{proof}
Let $\mathfrak{c}'$ be another integral ideal in the class $C$, and so
\begin{equation*}
\mathfrak{c}'=\nu\mathfrak{c}\quad\textrm{for some}~\nu\in K^\times~
\textrm{such that}~\nu\equiv^*1\Mod{\mathfrak{f}}.
\end{equation*}
Then we may write $\nu$ as
\begin{equation*}
\nu=1+x\quad\textrm{for some}~x\in\mathfrak{f}\mathfrak{c}^{-1}.
\end{equation*}
Let
\begin{equation*}
\mathbf{b}_j'=\Psi(\varphi(\nu^{-1})y_j)
=\left[
\begin{matrix}
\varphi(\nu^{-1})^{\psi_1} & 0 & \cdots & 0\\
0 & \varphi(\nu^{-1})^{\psi_2} & \cdots & 0\\
\vdots & \vdots & \ddots & \vdots\\
0 & 0 & \cdots & \varphi(\nu^{-1})^{\psi_g}
\end{matrix}
\right]\mathbf{b}_j
\quad\textrm{for $1\leq j\leq 2g$}.
\end{equation*}
It then follows from the proof of \cite[Proposition 6.3]{K-S-Y} that
$\{\mathbf{b}_1',\ldots,\mathbf{b}_{2g}'\}$ is a symplectic basis of $\mathcal{P}_{\mathfrak{c}'}$, and the associated CM-point is 
\begin{equation*}
\left[\begin{matrix}\mathbf{b}_{g+1}' & \cdots
&\mathbf{b}_{2g}'\end{matrix}\right]^{-1}
\left[\begin{matrix}\mathbf{b}_1' & \cdots
&\mathbf{b}_g'\end{matrix}\right]
=\left[\begin{matrix}\mathbf{b}_{g+1} & \cdots
&\mathbf{b}_{2g}\end{matrix}\right]^{-1}
\left[\begin{matrix}\mathbf{b}_1 & \cdots
&\mathbf{b}_g\end{matrix}\right]=Z_0^*.
\end{equation*}
\par
Since $K_0$ is the subfield of $K$ fixed by the subgroup $\langle\sigma\in\mathrm{Gal}(L/\mathbb{Q})~|~
\sigma|_K=\varphi_i~\textrm{for some}~i\rangle$
of $\mathrm{Gal}(L/\mathbb{Q})$, we have
$(\mathfrak{f}\mathcal{O}_L)^{\varphi_i}=
(\mathfrak{f}_0\mathcal{O}_L)^{\varphi_i}=\mathfrak{f}_0\mathcal{O}_L=\mathfrak{f}^*\mathcal{O}_L$.
Hence we see from the fact $x\in\mathfrak{f}\mathfrak{c}^{-1}$ that
\begin{equation}\label{g (nu)}
\varphi(\nu)=\varphi(1+x)=\prod_{i=1}^n(1+x)^{\varphi_i}\in K^*\cap(1+\mathfrak{f}^*\varphi(\mathfrak{c})^{-1}\mathcal{O}_L)=
1+\mathfrak{f}^*\varphi(\mathfrak{c})^{-1}.
\end{equation}
Since $N\in\mathfrak{f}^*\varphi(\mathfrak{c}')^{-1}$ and $\{\varphi(\nu^{-1})y_1,\ldots,\varphi(\nu^{-1})y_{2g}\}$ is a $\mathbb{Z}$-basis for
$\mathfrak{f}^*\varphi(\mathfrak{c}')^{-1}$, one can express $N$ as
\begin{equation*}
N=\sum_{j=1}^{2g}r_j'\varphi(\nu^{-1})y_j\quad\textrm{for some integers}~
r_1',\ldots,r_{2g}'.
\end{equation*}
Hence we have
\begin{equation*}
\varphi(\nu)=\sum_{j=1}^{2g}(r_j'/N)y_j,
\end{equation*}
which implies by (\ref{expressN}) and (\ref{g (nu)})
\begin{equation*}
\left[\begin{matrix}r_1'/N\\\vdots\\r_{2g}'/N\end{matrix}\right]
\in\left[\begin{matrix}r_1/N\\\vdots\\r_{2g}/N\end{matrix}\right]
+\mathbb{Z}^{2g\phantom{\big|}\hspace{-0.1cm}}.
\end{equation*}
Therefore, the proposition follows from Lemma \ref{Igusa} (iii).
\end{proof}

\section {Galois conjugates of Siegel invariants}

Finally, we shall show that under the Assumption \ref{pp} the Siegel invariant $\Theta_\mathfrak{f}(C)$ lies in the ray class field $K_\mathfrak{f}$ and
satisfies the natural transformation formula via the Artin reciprocity map for $\mathfrak{f}$.
\par

Let $h:K^*\rightarrow M_{2g}(\mathbb{Q})$
be the regular representation with respect to the $\mathbb{Q}$-basis
$\{y_1,\ldots,y_{2g}\}$ of $K^*$, that is,
$h$ is the map given by the relation
\begin{equation}\label{hayay}
h(a)\left[\begin{matrix}
y_1 \\ \vdots \\ y_{2g}
\end{matrix}\right]=
a\left[\begin{matrix}
y_1 \\ \vdots \\ y_{2g}
\end{matrix}\right]\quad(a\in K^*).
\end{equation}
We naturally extend $h$ to the map $(K^*)_\mathbb{A}\rightarrow M_{2g}(\mathbb{Q}_\mathbb{A})$,
and also denote it by $h$.

\begin{proposition}[Shimura's Reciprocity Law]\label{reciprocity}
Let $f\in\mathcal{F}$. If $f$ is finite at $Z_0^*\in\mathbb{H}_g$, then
$f(Z_0^*)$ belongs to $K_\textrm{ab}$. Moreover, if $s\in K_\mathbb{A}^\times$, then
we get $h(\varphi(s))\in G_{\mathbb{A}+}$ and
\begin{equation*}
f(Z_0^*)^{[s,K]}=f^{\tau(h(\varphi(s)^{-1}))}(Z_0^*).
\end{equation*}
\end{proposition}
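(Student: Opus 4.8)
The plan is to derive the statement from Shimura's main theorem of complex multiplication (\cite{Shimura00}), applied to the principally polarized abelian variety $A_\mathfrak{c}=(\mathbb{C}^{g}/\Psi(\mathfrak{f}^*\varphi(\mathfrak{c})^{-1}),\,E_{\xi m_\mathfrak{c}})$ furnished by Proposition \ref{cpp}. By construction $A_\mathfrak{c}$ has complex multiplication by $K^*$ of CM-type $\{\psi_1,\dots,\psi_g\}$, and Assumption \ref{pp} (C2) identifies the reflex field of $(K^*,\{\psi_j\})$ with $(K^*)^*=K$, the associated reflex norm being precisely the idelic map $\varphi:K_\mathbb{A}^\times\rightarrow(K^*)_\mathbb{A}^\times$ that extends $a\mapsto\prod_{i=1}^n a^{\varphi_i}$ (compare (\ref{reflex norm})). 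Moreover $Z_0^*$ is by definition the normalized period matrix attached to the symplectic basis $\{\mathbf{b}_j\}=\{\Psi(y_j)\}$, so $Z_0^*$ represents the moduli point of $A_\mathfrak{c}$ together with the level-$N$ structure induced by $\{\mathbf{b}_j\}$; since $\mathbb{H}_g/\Gamma(N)$ is a moduli space for principally polarized abelian varieties of dimension $g$ with level-$N$ structure and $\mathcal{F}$ is its function field over $\mathbb{Q}_\textrm{ab}$, evaluating $f\in\mathcal{F}$ at $Z_0^*$ is the same as evaluating the corresponding modular invariant at that moduli point.

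Granting that $f$ is finite at $Z_0^*$, I would first observe that a CM abelian variety equipped with a polarization and a level structure is defined over a number field which, by the main theorem of complex multiplication, is an abelian extension of the reflex field of its CM-type --- here an abelian extension of $(K^*)^*=K$ --- so that, using the $\mathbb{Q}_\textrm{ab}$-rationality built into $\mathcal{F}$, the special value $f(Z_0^*)$ lies in $K_\textrm{ab}$. For the Galois action, the main theorem asserts that for $s\in K_\mathbb{A}^\times$ the automorphism $[s,K]$ carries $A_\mathfrak{c}$, together with its polarization and torsion data, to the abelian variety whose period lattice is $\Psi(\varphi(s)^{-1}\mathfrak{f}^*\varphi(\mathfrak{c})^{-1})$ and whose torsion structure is transported by the reflex-norm idele $\varphi(s)\in(K^*)_\mathbb{A}^\times$. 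Expressed in the $\mathbb{Q}$-basis $\{y_j\}$ of $K^*$, this transformation is realized by the matrix $h(\varphi(s)^{-1})\in\mathrm{GL}_{2g}(\mathbb{Q}_\mathbb{A})$; matching the induced change of period point and of level structure against the properties of $\tau$ --- property (A1) for the rational part of $h(\varphi(s)^{-1})$, and (A2) together with the fixed-field description of $\mathcal{F}_N$ for its non-archimedean part --- then yields $f(Z_0^*)^{[s,K]}=f^{\tau(h(\varphi(s)^{-1}))}(Z_0^*)$. That $h(\varphi(s))$ lands in $G_{\mathbb{A}+}$ follows from (\ref{Tr}): the basis $\{y_j\}$ carries the symplectic form $(a,b)\mapsto\mathrm{Tr}_{K^*/\mathbb{Q}}(\xi m_\mathfrak{c}\,a\overline{b})$, and since $\varphi(s)\overline{\varphi(s)}=\mathrm{N}_{K/\mathbb{Q}}(s)$ (because $\varphi_1,\dots,\varphi_n,\overline{\varphi}_1,\dots,\overline{\varphi}_n$ exhaust the embeddings of $K$) lies in $\mathbb{Q}_\mathbb{A}^\times$ and is positive at the archimedean place ($K$ being totally imaginary), multiplication by $\varphi(s)$ rescales that form by $\mathrm{N}_{K/\mathbb{Q}}(s)$, so $h(\varphi(s))$ is a symplectic similitude in $G_{\mathbb{A}+}$.

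The principal obstacle is the bookkeeping in the previous step. Shimura's main theorem is most conveniently quoted in adelic form --- in terms of a CM lattice, a chosen $\mathbb{Q}$-basis of the ambient CM-algebra, and the action of the reflex-norm idele --- and translating it faithfully into the matrix $h(\varphi(s)^{-1})$ and the automorphism $\tau(h(\varphi(s)^{-1}))$ of $\mathcal{F}$ demands care with the inverses, with the normalization of the Artin reciprocity map, and with the compatibility of the non-archimedean part of $h(\varphi(s))$ --- which moves the level-$N$ structure and hence, via (A2), acts on the cyclotomic Fourier coefficients of $f$ --- with the rational similitude factor $\mathrm{N}_{K/\mathbb{Q}}(s)$ through which $[s,K]$ restricts to $\mathbb{Q}_\textrm{ab}$. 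A useful consistency check is that $\tau(h(\varphi(s)^{-1}))$ acts on $\mathbb{Q}_\textrm{ab}$ through the inverse of its similitude factor, that is, as $[\mathrm{N}_{K/\mathbb{Q}}(s),\mathbb{Q}]$, which is exactly the restriction of $[s,K]$. I would carry this out not by reproving the main theorem but by invoking Shimura's statement in \cite{Shimura00} and verifying only that the conventions used here for $\varphi$, for the regular representation $h$, and for the CM-point $Z_0^*$ agree with his.
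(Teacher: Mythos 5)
Your proposal is correct and ultimately takes the same route as the paper: the paper's proof is simply the citation ``See \cite[Lemma 9.5 and Theorem 9.6]{Shimura00},'' and you likewise defer to Shimura's main theorem of complex multiplication, adding only an (accurate) sketch of why the conventions here --- the reflex field $(K^*)^*=K$, the reflex norm $\varphi$, the regular representation $h$ in the symplectic basis $\{y_j\}$, and the similitude factor $\mathrm{N}_{K/\mathbb{Q}}(s)$ --- match his. The extra exposition is sound but does not constitute a different method.
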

\begin{proof}
See \cite[Lemma 9.5 and Theorem 9.6]{Shimura00}.
\end{proof}

\begin{remark}
Observe that we are assuming $(K^*)^*=K$.
\end{remark}

\begin{theorem}\label{main}
If $\Theta_\mathfrak{f}(C)$ is finite, then it lies in $K_\mathfrak{f}$. Furthermore, it satisfies \begin{equation*}
\Theta_\mathfrak{f}(C)^{\sigma_\mathfrak{f}(D)}=
\Theta_\mathfrak{f}(CD)\quad\textrm{for all}~D\in\mathrm{Cl}(\mathfrak{f}),
\end{equation*}
where $\sigma_\mathfrak{f}$ is the Artin reciprocity map for $\mathfrak{f}$.
\end{theorem}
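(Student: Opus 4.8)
The plan is to apply Shimura's reciprocity law (Proposition \ref{reciprocity}) to the modular function $\Theta(\left[\begin{smallmatrix}\mathbf{r}/N\end{smallmatrix}\right],Z)\in\mathcal{F}_N$ (Proposition \ref{bigtheta}) evaluated at the CM-point $Z_0^*$, and then to translate the idelic action into the ray-class action via the reflex norm map $\varphi$. First I would note that since $\Theta_\mathfrak{f}(C)$ is the value at $Z_0^*$ of a function in $\mathcal{F}\subseteq\mathcal{A}_0(\mathbb{Q}_{\mathrm{ab}})$, Proposition \ref{reciprocity} immediately gives $\Theta_\mathfrak{f}(C)\in K^*_{\mathrm{ab}}$... wait, we need it in $K_{\mathrm{ab}}$ — indeed Proposition \ref{reciprocity} is stated for the reflex: since $(K^*)^*=K$ by Assumption \ref{pp} (C2), applying the reciprocity law with reflex field $K$ yields $\Theta_\mathfrak{f}(C)\in K_{\mathrm{ab}}$, and for $s\in K_\mathbb{A}^\times$ one has $h(\varphi(s))\in G_{\mathbb{A}+}$ and $\Theta_\mathfrak{f}(C)^{[s,K]}=\Theta(\left[\begin{smallmatrix}\mathbf{r}/N\end{smallmatrix}\right],Z_0^*)^{\tau(h(\varphi(s)^{-1}))}$ evaluated at $Z_0^*$.

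Next I would compute the right-hand side explicitly. Given $D\in\mathrm{Cl}(\mathfrak{f})$, pick an integral ideal $\mathfrak{d}$ in $D^{-1}$ prime to $\mathfrak{f}$ and an idele $s\in K_\mathbb{A}^\times$ with $s\mathcal{O}_K=\mathfrak{d}$, $s_\mathfrak{p}=1$ for $\mathfrak{p}\mid\mathfrak{f}$, so that $[s,K]|_{K_\mathfrak{f}}=\sigma_\mathfrak{f}(D)$. The key computation is to identify $h(\varphi(s)^{-1})$ modulo $2N^2$: since $s$ is a unit at all primes dividing $\mathfrak{f}$ (hence dividing $2N^2$ after adjusting — one must be slightly careful about primes dividing $2N^2$ but not $\mathfrak{f}$, which is where choosing $\mathfrak{d}$ coprime to $2N^2\mathcal{O}_K$ helps), $\varphi(s)$ is a unit at those primes, and one shows $h(\varphi(s)^{-1})_p$ is congruent to a matrix of the shape $\left[\begin{smallmatrix}I_g&O\\O&tI_g\end{smallmatrix}\right]$ modulo $2N^2 M_{2g}(\mathbb{Z}_p)$ with $t$ a suitable integer — this uses that $N$ itself generates the relevant cyclic piece, analogously to the imaginary-quadratic case. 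Then Lemma \ref{bigtransf} (iii) converts the $\tau$-action into a change $\mathbf{s}\mapsto t\mathbf{s}$ in the theta characteristic, and one checks via (A1) of the $\tau$-homomorphism together with the finite part of $h(\varphi(s)^{-1})$ acting on $Z_0^*$ by the fractional-linear action that the resulting CM-point and characteristic vector are exactly those attached to the ideal $\varphi(\mathfrak{d})\mathfrak{f}^*\varphi(\mathfrak{c})^{-1}=\mathfrak{f}^*\varphi(\mathfrak{c}\mathfrak{d})^{-1}$, i.e.\ to the ray class $CD$.

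The main obstacle will be the bookkeeping in the previous step: relating the new lattice $\mathfrak{f}^*\varphi(\mathfrak{c}\mathfrak{d})^{-1}$, with its own principal polarization (here one reuses Proposition \ref{cpp} with $\mathfrak{c}$ replaced by $\mathfrak{c}\mathfrak{d}$) and its own symplectic basis, to the old data transformed by $h(\varphi(s)^{-1})$, and verifying that the integer vector $(r_j)$ expressing $N$ transforms correctly — this is the analogue of the classical fact that multiplication by $\varphi(s)^{-1}$ sends the lattice for $\mathfrak{c}$ to the lattice for $\mathfrak{c}\mathfrak{d}$ up to a symplectic change of basis, which is the content of Propositions \ref{indep2} and \ref{indep1} combined with the reciprocity law. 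Once the matrix $h(\varphi(s)^{-1})\in G_{\mathbb{A}+}$ is decomposed (via the strong approximation / $\Gamma(2N^2)$-coset description recalled in $\S$3) as a product of an element of $G_+$ (acting geometrically on $Z_0^*$) and an element of the form handled by Lemma \ref{bigtransf} (iii), the identity $\Theta_\mathfrak{f}(C)^{\sigma_\mathfrak{f}(D)}=\Theta_\mathfrak{f}(CD)$ drops out after invoking well-definedness (Propositions \ref{indep2}, \ref{indep1}) to absorb the ambiguity in the choice of symplectic basis and of representative ideal. Finally, since $\sigma_\mathfrak{f}(D)$ for $D$ ranging over $\mathrm{Cl}(\mathfrak{f})$ generates $\mathrm{Gal}(K_\mathfrak{f}/K)$ and fixes $\Theta_\mathfrak{f}(C)$ precisely when... — more simply, the transformation formula together with $\Theta_\mathfrak{f}(C)\in K_{\mathrm{ab}}$ forces $\Theta_\mathfrak{f}(C)\in K_\mathfrak{f}$, because its stabilizer in $\mathrm{Gal}(K_{\mathrm{ab}}/K)$ contains the kernel of $\mathrm{Cl}(\mathfrak{f})\to\mathrm{Gal}(K_\mathfrak{f}/K)$; this completes the proof.
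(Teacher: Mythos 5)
Your proposal follows essentially the same route as the paper's proof: apply Shimura's reciprocity law at the CM-point, decompose $h(\varphi(s)^{-1})$ as an integral adelic unit times a rational element of $G_+$ (with a symplectic matrix supplied by strong approximation), convert the two factors via Lemma \ref{bigtransf} (iii), (ii), (i) and (A1), and deduce membership in $K_\mathfrak{f}$ by specializing to a principal $\mathfrak{d}$ generated by $d\equiv 1\pmod{\mathfrak{f}}$. The only caveats are bookkeeping slips rather than gaps --- you take $\mathfrak{d}\in D^{-1}$ yet identify the transformed lattice with $\mathfrak{f}^*\varphi(\mathfrak{c}\mathfrak{d})^{-1}$, and the paper actually works at an auxiliary modulus $\mathfrak{g}=M\mathcal{O}_K$ with $2N^2\mid M$ before descending to $K_\mathfrak{f}$ --- so the method is the same.
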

\begin{proof}
Since $\Theta_\mathfrak{f}(C)\in K_\textrm{ab}$ by Propositions \ref{bigtheta} and  \ref{reciprocity}, there is a positive integer $M$ such that $2N^2\,|\,M$ and $\Theta_\mathfrak{f}(C)\in K_\mathfrak{g}$, where $\mathfrak{g}=M\mathcal{O}_K$.
We can take integral ideals $\mathfrak{c}\in C$ and $\mathfrak{d}\in D$
which are relatively prime to $\mathfrak{g}$ by using the surjectivity of the natural map
$\mathrm{Cl}(\mathfrak{g})\rightarrow\mathrm{Cl}(\mathfrak{f})$.
Let $\{\mathbf{b}_1,\ldots,\mathbf{b}_{2g}\}$ and $\{\mathbf{d}_1,\ldots,\mathbf{d}_{2g}\}$
be symplectic bases of $\mathcal{P}_{\mathfrak{c}}$ and $\mathcal{P}_{\mathfrak{cd}}$, respectively.
Furthermore, let $y_1,\ldots,y_{2g}$ and $z_1,\ldots,z_{2g}$
be elements of $\mathfrak{f}^*\varphi(\mathfrak{c})^{-1}$ and
$\mathfrak{f}^*\varphi(\mathfrak{c}\mathfrak{d})^{-1}$, respectively,
such that $\mathbf{b}_j=\Psi(y_j)$ and $\mathbf{d}_j=\Psi(z_j)$
for $1\leq j\leq 2g$.
\par
Since $\mathfrak{f}^*\varphi(\mathfrak{c})^{-1}\subseteq
\mathfrak{f}^*\varphi(\mathfrak{c}\mathfrak{d})^{-1}=
\mathfrak{f}^*\varphi(\mathfrak{c})^{-1}\varphi(\mathfrak{d})^{-1}$, we have
\begin{equation}\label{ydz}
\left[\begin{matrix}
y_1\\\vdots\\y_{2g}
\end{matrix}\right]=\delta
\left[\begin{matrix}
z_1\\\vdots\\z_{2g}
\end{matrix}\right]\quad\textrm{for some}~\delta\in M_{2g}(\mathbb{Z})\cap\mathrm{GL}_{2g}(\mathbb{Q}),
\end{equation}
and hence
\begin{equation*}
\left[\begin{matrix}\mathbf{b}_1 & \cdots & \mathbf{b}_{2g}\end{matrix}\right]=
\left[\begin{matrix}\mathbf{d}_1 & \cdots & \mathbf{d}_{2g}\end{matrix}\right]\delta^T.
\end{equation*}
If we let $Z_0^*$ and $Z_1^*$ be the CM-points
associated with $\{\mathbf{b}_1,\ldots,\mathbf{b}_{2g}\}$ and
$\{\mathbf{d}_1,\ldots,\mathbf{d}_{2g}\}$, respectively,
then we obtain
\begin{equation}\label{ZdZ}
Z_1^*=\delta^{-1}(Z_0^*).
\end{equation}
We also obtain 
\begin{eqnarray*}
\left[\begin{matrix}
O&-I_g\\I_g&O
\end{matrix}\right]&=&
\left[\begin{matrix}
E_{\xi m_\mathfrak{c}}(\mathbf{b}_i,\mathbf{b}_j)
\end{matrix}\right]_{1\leq i,j\leq 2g}\\
&=&\delta\left[\begin{matrix}E_{\xi m_\mathfrak{c}}(\mathbf{d}_i,\mathbf{d}_j)
\end{matrix}\right]_{1\leq i,j\leq 2g}\delta^T\\
&=&\delta\left[\begin{matrix}m_\mathfrak{c}m_\mathfrak{cd}^{-1}E_{\xi m_\mathfrak{cd}}(\mathbf{d}_i,\mathbf{d}_j)
\end{matrix}\right]_{1\leq i,j\leq 2g}\delta^T\\
&=&m_\mathfrak{c}m_\mathfrak{cd}^{-1}\delta
\left[\begin{matrix}
O&-I_g\\I_g&O
\end{matrix}\right]\delta^T.
\end{eqnarray*}
This shows that
\begin{equation}\label{deltain}
\delta\in M_{2g}(\mathbb{Z})\cap G_+~
\textrm{with}~\det(\delta)=(m_\mathfrak{c}^{-1}m_\mathfrak{cd})^g=\mathcal{N}(\mathfrak{d})^g\quad
\textrm{by (\ref{mc})}.
\end{equation}
\par
Let $s=(s_p)_p$ be an idele of $K$ such that
\begin{equation}\label{idele}
\left\{\begin{array}{rccl}s_p&=&
1 & \textrm{if}~p\,|\,M,\\
s_p(\mathcal{O}_K)_p&=&\mathfrak{d}_p & \textrm{if}~p\nmid M.
\end{array}\right.
\end{equation}
If we denote by $\widetilde{D}$ the ray class in $\mathrm{Cl}(\mathfrak{g})$ containing $\mathfrak{d}$, then we get by (\ref{idele}) that
\begin{equation}\label{sKsD}
\begin{array}{rcl}
[s,K]|_{K_\mathfrak{g}}&=&\sigma_\mathfrak{g}(\widetilde{D}),\\
\quad \varphi(s)_p^{-1}(\mathcal{O}_{K^*})_p
&=&\varphi(\mathfrak{d})^{-1}_p\quad\textrm{for all rational primes $p$}.
\end{array}
\end{equation}
By (\ref{hayay})$\sim$(\ref{sKsD}), we deduce that for each rational prime $p$, the components of each of the vectors
\begin{equation*}
h(\varphi(s)^{-1})_p\left[\begin{matrix}y_1\\\vdots\\y_{2g}\end{matrix}\right]
\quad\textrm{and}\quad
\delta^{-1}\left[\begin{matrix}y_1\\\vdots\\y_{2g}\end{matrix}\right]
\end{equation*}
form a basis of $\mathfrak{f}^*\varphi(\mathfrak{c}\mathfrak{d})^{-1}_p=
\mathfrak{f}^*\varphi(\mathfrak{c})^{-1}\varphi(\mathfrak{d})^{-1}_p$. Thus there is a matrix $u=(u_p)_p\in\prod_p\mathrm{GL}_{2g}(\mathbb{Z}_p)$ satisfying
\begin{equation}\label{hgsud}
h(\varphi(s)^{-1})=u\delta^{-1}.
\end{equation}
\par
On the other hand, 
there exists a matrix $\gamma\in\mathrm{Sp}_{2g}(\mathbb{Z})$ such that
\begin{equation}\label{dIrM}
\delta\equiv\left[\begin{matrix}
I_g & O\\
O & \mathcal{N}(\mathfrak{d})I_g
\end{matrix}\right]\gamma\Mod{M\cdot M_{2g}(\mathbb{Z})}
\end{equation}
by (\ref{deltain}) and the surjectivity of the reduction $\mathrm{Sp}_{2g}(\mathbb{Z})
\rightarrow\mathrm{Sp}_{2g}(\mathbb{Z}/M\mathbb{Z})$ (\cite{Rapinchuk}).
Since $h(\varphi(s)^{-1})_p=I_{2g}$ for all $p\,|\,M$ by (\ref{idele}), we achieve
$u_p=\delta$ for all $p\,|\,M$ by (\ref{hgsud}). Hence we obtain by (\ref{dIrM}) that
\begin{equation}\label{urIMM}
u_p\gamma^{-1}\equiv\left[\begin{matrix}I_g&O\\
O&\mathcal{N}(\mathfrak{d})I_g\end{matrix}\right]\Mod{M\cdot M_{2g}(\mathbb{Z}_p)}
\end{equation}
for all rational primes $p$.
\par
If we write
\begin{equation*}
N=\sum_{j=1}^{2g}r_jy_j\quad\textrm{for some integers}~r_1,\ldots,r_{2g},
\end{equation*}
then we see by (\ref{ydz}) that
\begin{equation}\label{newvector}
N=\left[\begin{matrix}r_1 & \cdots & r_{2g}\end{matrix}\right]
\left[\begin{matrix}y_1\\\vdots\\y_{2g}\end{matrix}\right]
=(\left[\begin{matrix}r_1 & \cdots & r_{2g}\end{matrix}\right]\delta)
(\delta^{-1}\left[\begin{matrix}y_1\\\vdots\\y_{2g}\end{matrix}\right])
=(\left[\begin{matrix}r_1 & \cdots & r_{2g}\end{matrix}\right]\delta)
\left[\begin{matrix}z_1\\\vdots\\z_{2g}\end{matrix}\right].
\end{equation}
Letting $\mathbf{v}=\left[\begin{matrix}r_1/N\\\vdots\\r_{2g}/N\end{matrix}\right]$
we derive that
\begin{eqnarray*}
\Theta_\mathfrak{f}(C)^{\sigma_\mathfrak{g}(\widetilde{D})}
&=&\Theta_\mathfrak{f}(C)^{[s,K]}\quad\textrm{by (\ref{sKsD})}\\
&=&\Theta(\mathbf{v},Z_0^*)^{[s,K]}\quad\textrm{by Definition \ref{defSiegel}}\\
&=&\Theta(\mathbf{v},Z)^{\tau(h(\varphi(s)^{-1}))}|_{Z=Z_0^*}\quad\textrm{by Proposition \ref{reciprocity}}\\
&=&\Theta(\mathbf{v},Z)^{\tau(u\delta^{-1})}|_{Z=Z_0^*}\quad\textrm{by (\ref{hgsud})}\\
&=&\Theta(\mathbf{v},Z)^{\tau(u\gamma^{-1})\tau(\gamma)\tau(\delta^{-1})}|_{Z=Z_0^*}\\
&=&\Theta(\left[\begin{smallmatrix}
I_g&O\\
O&\mathcal{N}(\mathfrak{d}))I_g\end{smallmatrix}\right]\mathbf{v},Z)^{\tau(\gamma)\tau(\delta^{-1})}|_{Z=Z_0^*}
\quad\textrm{by (\ref{urIMM}) and Lemma \ref{bigtransf} (iii)}\\
&=&\Theta(\gamma^T\left[\begin{smallmatrix}
I_g&O\\
O&\mathcal{N}(\mathfrak{d})I_g\end{smallmatrix}\right]\mathbf{v},Z)^{\tau(\delta^{-1})}|_{Z=Z_0^*}
\quad\textrm{by Lemma \ref{bigtransf} (ii)}\\
&=&\Theta(\delta^T\mathbf{v},Z)^{\tau(\delta^{-1})}|_{Z=Z_0^*}
\quad\textrm{by (\ref{dIrM}) and Lemma \ref{bigtransf} (i)}\\
&=&\Theta(\delta^T\mathbf{v},\delta^{-1}(Z_0^*))\quad\textrm{owing to the fact
$\delta\in G_+$ and (A1)}\\
&=&\Theta_\mathfrak{f}(CD)\quad\textrm{by (\ref{ZdZ}), (\ref{newvector}) and Definition \ref{defSiegel}}.
\end{eqnarray*}
In particular, if $D$ is the identity class of $\mathrm{Cl}(\mathfrak{f})$ then $\sigma_\mathfrak{g}(\widetilde{D})$ leaves $\Theta_\mathfrak{f}(C)$ fixed.
Therefore, we conclude that $\Theta_\mathfrak{f}(C)$ lies in $K_\mathfrak{f}$ as desired.
\end{proof}

Lastly, we expect that under the Assumption \ref{pp} the following conjecture will turn out to be
affirmative.

\begin{conjecture}\label{conj}
The Siegel invariant $\Theta_\mathfrak{f}(C)$ discussed here is a primitive generator of the fixed field of
$\ker(\widetilde{\varphi})$ in the ray class field $K_\mathfrak{f}$ of a CM-field $K$, where $\widetilde{\varphi}:\mathrm{Cl}(\mathfrak{f})\rightarrow\mathrm{Cl}(\mathfrak{f^*})$ is the natural homomorphism induced from the map $\varphi$ defined in (\ref{reflex norm}). 
Here, $\mathrm{Cl}(\mathfrak{f}^*)$ is the ray class group of $K^*$ modulo $\mathfrak{f}^*$.
\end{conjecture}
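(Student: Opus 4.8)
The plan is to reproduce, over the reflex field $K^*$, the classical argument showing that the Siegel--Ramachandra invariants generate ray class fields of imaginary quadratic fields, i.e.\ to combine a descent step with the Stickelberger/$L$-value mechanism of Proposition \ref{criterion}. Write $L'$ for the subfield of $K_\mathfrak{f}$ fixed by $\sigma_\mathfrak{f}(\ker(\widetilde\varphi))$; thus $\mathrm{Gal}(L'/K)\cong\mathrm{Cl}(\mathfrak{f})/\ker(\widetilde\varphi)$, and $\widetilde\varphi$ identifies this quotient with a subgroup of $\mathrm{Cl}(\mathfrak{f}^*)$. It suffices to establish (i) $\Theta_\mathfrak{f}(C)\in L'$ for every $C\in\mathrm{Cl}(\mathfrak{f})$, and (ii) the stabilizer of $\Theta_\mathfrak{f}(C)$ in $\mathrm{Gal}(K_\mathfrak{f}/K)$ is exactly $\sigma_\mathfrak{f}(\ker(\widetilde\varphi))$; granting Theorem \ref{main}, (i) and (ii) together give the conjecture.

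For (i) one shows that $\Theta_\mathfrak{f}(C)$ depends on $C$ only through $\widetilde\varphi(C)\in\mathrm{Cl}(\mathfrak{f}^*)$. Given $D\in\ker(\widetilde\varphi)$, choose $\mathfrak{d}\in D$ prime to $\mathfrak{f}$ with $\varphi(\mathfrak{d})=\lambda\mathcal{O}_{K^*}$ and $\lambda\equiv1\Mod{\mathfrak{f}^*}$, and compare the principally polarized abelian variety with level structure defining $\Theta_\mathfrak{f}(CD)$ with the one defining $\Theta_\mathfrak{f}(C)$ via the homothety $T=\mathrm{diag}(\lambda^{\psi_1},\dots,\lambda^{\psi_g})$, which sends $\Psi(\mathfrak{f}^*\varphi(\mathfrak{c}\mathfrak{d})^{-1})$ onto $\Psi(\mathfrak{f}^*\varphi(\mathfrak{c})^{-1})$. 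Using $\varphi(\mathfrak{d})\overline{\varphi(\mathfrak{d})}=\mathcal{N}_K(\mathfrak{d})\mathcal{O}_{K^*}$ together with $m_{\mathfrak{c}\mathfrak{d}}=\mathcal{N}_K(\mathfrak{d})\,m_\mathfrak{c}$ from \eqref{mc}, the polarization transported by $T$ is $E_{\xi u^{-1}m_\mathfrak{c}}$ for some totally positive unit $u$ of the maximal totally real subfield $F^*$ of $K^*$; one then argues that replacing $\xi$ by $\xi u^{-1}$ (which still satisfies (P1)--(P3)) does not change the point on the Siegel modular variety at which $\Theta$ is evaluated --- either because the two principally polarized abelian varieties so obtained are isomorphic, or because the value of $\Theta$ is independent of the admissible choice of $\xi$ fixed in the remark following Assumption \ref{pp}. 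Finally $\lambda\equiv1\Mod{\mathfrak{f}^*}$ forces the level-$N$ vector of Definition \ref{defSiegel} to be preserved modulo $\mathbb{Z}^{2g}$, so Lemma \ref{bigtransf}(i) yields $\Theta_\mathfrak{f}(CD)=\Theta_\mathfrak{f}(C)$. (Alternatively, one can rerun the idele computation in the proof of Theorem \ref{main} with $s$ chosen so that $\varphi(s)$ equals $\lambda$ times a unit idele.)

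For (ii) one needs a version of Proposition \ref{criterion} relative to the quotient $\mathrm{Cl}(\mathfrak{f})/\ker(\widetilde\varphi)$: if for every $D$ with $\widetilde\varphi(D)\neq1$ there is a character $\chi$ of $\mathrm{Cl}(\mathfrak{f})$ that is trivial on $\ker(\widetilde\varphi)$, satisfies $\chi(D)\neq1$, and has nonzero Stickelberger element $S_\mathfrak{f}(\chi)=\sum_{C\in\mathrm{Cl}(\mathfrak{f})}\chi(C)\ln|\Theta_\mathfrak{f}(C)|$, then the coset-decomposition argument of Proposition \ref{criterion}, carried out inside $L'$, forces $\Theta_\mathfrak{f}(C)$ to generate $L'$ over $K$. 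By (i) the Stickelberger element collapses to a sum over $\widetilde\varphi(\mathrm{Cl}(\mathfrak{f}))\subseteq\mathrm{Cl}(\mathfrak{f}^*)$ of $\chi$ against $\ln|\Theta|$ evaluated at the CM-points attached to the ideal classes of $K^*$ modulo $\mathfrak{f}^*$; the remaining task is to relate this sum to a special value of a Hecke $L$-function of $K^*$ by means of a genus-$g$ analogue of the second Kronecker limit formula applied to the product of theta constants in Definition \ref{bigdef}, and then to deduce $S_\mathfrak{f}(\chi)\neq0$ from non-vanishing of that $L$-value for $\chi\neq1$.

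The main obstacle is precisely this last point. In the genus-one situation recalled in \S\ref{introduce} the second Kronecker limit formula expresses $\ln|g_\mathfrak{f}(C)|$ directly in terms of Hecke $L$-values of $K$; in higher genus there is no off-the-shelf limit formula giving $\ln$ of a product of theta constants at CM-points of a reflex field as an $L$-value, and producing one --- or supplying a substitute source of non-vanishing, for instance a Rankin--Selberg argument or a regulator/transcendence input --- is exactly what is missing, which is why the assertion is stated only as a conjecture. A secondary, more technical difficulty is the unit ambiguity appearing in step (i), whose resolution requires either the independence of $\Theta$ from the admissible choice of $\xi$ or a norm argument for totally positive units of $F^*$.
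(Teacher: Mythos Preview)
The paper does not prove this statement: it is explicitly labeled a \emph{Conjecture} and is supported only by the numerical evidence of Example~\ref{explicit example}. There is therefore no ``paper's own proof'' against which to compare your attempt.

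Your proposal is not a proof either, and you say so yourself: it is an outline of the natural strategy (descend $\Theta_\mathfrak{f}(C)$ to the fixed field of $\ker(\widetilde\varphi)$, then apply the Stickelberger criterion of Proposition~\ref{criterion} on the quotient) together with an honest identification of the two missing ingredients --- a genus-$g$ analogue of the second Kronecker limit formula relating $\ln|\Theta_\mathfrak{f}(C)|$ to Hecke $L$-values of $K^*$, and the resolution of the totally-positive-unit ambiguity in the polarization. That diagnosis is accurate and matches why the authors state the result only as a conjecture. Your step~(i) is close in spirit to the argument of Proposition~\ref{indep1}, but note that the paper fixes $\xi$ once and for all (Remark after Assumption~\ref{pp}), so the independence of $\Theta$ from the admissible choice of $\xi$ is not something already established in the paper; it is part of what would need to be proved.
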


\begin{example}\label{explicit example}
Let $\ell$ be an odd prime and $g=({\ell-1})/{2}$.
Let $K=\mathbb{Q}(\zeta_\ell)$ with $\zeta_\ell=e^{2\pi \mathrm{i}/\ell}$.
Then $[K:\mathbb{Q}]=2g$.
For each $1\leq i\leq g$, let $\varphi_i$ be
the element of $\mathrm{Gal}(K/\mathbb{Q})$
determined by $\zeta_\ell^{\varphi_i}=\zeta_\ell^i$.
Then $(K;\{\varphi_1^{-1},\varphi_2^{-1},\ldots,\varphi_g^{-1}\})$ is a primitive CM-type and its reflex is $(K;\{\varphi_1,\varphi_2,\ldots,\varphi_g\})$
(\cite[p. 64]{Shimura98}).
Furthermore, $(\mathbb{C}^{g\phantom{\big|}\hspace{-0.1cm}}/\Psi(\mathcal{O}_{K}), E_\xi)$ with $\xi=(\zeta_\ell-\zeta_\ell^{-1})/\ell$ becomes a principally polarized abelian variety, where  $\Psi(a)=\left[\begin{matrix}
a^{\varphi_1}\\
\vdots\\
a^{\varphi_g}
\end{matrix}\right]$ for $a\in K$ (\cite[p. 817--818]{K-Y1}).
Hence it satisfies the Assumption \ref{pp}.
\par
Assume that the class number of $K$ is $1$.
Let $\mathfrak{f}=N\mathcal{O}_K$ for a positive integer $N$, and let $C\in\mathrm{Cl}(\mathfrak{f})$.
Take an integral ideal $\mathfrak{c}$ of $K$ in $C$.
Then $\mathfrak{c}=\lambda\mathcal{O}_K$ for some $\lambda\in\mathcal{O}_K$.
Let 
\begin{equation*}
x_j = \left\{ \begin{array}{ll}
\zeta_\ell^{2j} & \textrm{for $1\leq j\leq g$}\\
\sum_{k=1}^{j-g}\zeta_\ell^{2k-1} & \textrm{for $g+1\leq j\leq 2g$},
\end{array} \right.
\end{equation*}
and $\varphi(a)=\prod_{i=1}^g a^{\varphi_i^{-1}}$ for $a\in K$.
Then $\{N\varphi(\lambda)^{-1}x_j\}_{j=1}^{2g}$ forms a free $\mathbb{Z}$-basis of $\mathfrak{f}\varphi(\mathfrak{c})^{-1}$ and
\begin{equation*}
\left[\begin{matrix}E_{\xi m_\mathfrak{c}}(\Psi(N\varphi(\lambda)^{-1}x_i),\Psi(N\varphi(\lambda)^{-1}x_j))\end{matrix}\right]_{1\leq i,j\leq 2g}=\left[\begin{matrix}O & -I_g\\
I_g&O\end{matrix}\right].
\end{equation*}
Thus $\{\Psi(N\varphi(\lambda)^{-1}x_j)\}_{j=1}^{2g}$ is a symplectic basis of $(\mathbb{C}^{g\phantom{\big|}\hspace{-0.1cm}}/\Psi(\mathfrak{f}\varphi(\mathfrak{c})^{-1}), E_{\xi m_\mathfrak{c}})$
and the corresponding CM-point is
\begin{equation}\label{cm point}
\begin{array}{ccl}
Z^*_\ell&=&
\left[\begin{matrix}
\Psi(N\varphi(\lambda)^{-1}x_{g+1})&\cdots&\Psi(N\varphi(\lambda)^{-1}x_{2g})
\end{matrix}\right]^{-1}
\left[\begin{matrix}
\Psi(N\varphi(\lambda)^{-1}x_1)&\cdots&\Psi(N\varphi(\lambda)^{-1}x_g)
\end{matrix}\right]\\
&=&\left[\begin{matrix}
\Psi(x_{g+1})&\cdots&\Psi(x_{2g})
\end{matrix}\right]^{-1}
\left[\begin{matrix}
\Psi(x_1)&\cdots&\Psi(x_g)
\end{matrix}\right].
\end{array}
\end{equation}
Note that $Z^*_\ell$ does not depend on a ray class $C$.
On the other hand, there exist integers $r_1,r_2,\ldots,r_{2g}$ such that
\begin{equation*}
N=\sum_{j=1}^{2g}r_j (N\varphi(\lambda)^{-1})x_j,
\end{equation*}
that is, $\varphi(\lambda)=\sum_{j=1}^{2g}r_j x_j$.
Then we obtain
\begin{equation}\label{value}
\Theta_\mathfrak{f}(C)=\Theta_{\left[\begin{smallmatrix}
r_1/N \\ r_2/N\\ \vdots \\ r_{2g}/N
\end{smallmatrix}\right]}(Z^*_\ell).
\end{equation}

Now, consider the case where $K=\mathbb{Q}(\zeta_5)$.

\begin{itemize}
\item[(i)] Let $\mathfrak{f}=5\mathcal{O}_K$.
One can readily show that $[K_\mathfrak{f} :K]=5$ and
\begin{equation*}
\mathrm{Cl}(\mathfrak{f})=\langle C_1 \rangle \cong \mathbb{Z}/5\mathbb{Z},
\end{equation*}
where $C_1$ denotes the ray class in $\mathrm{Cl}(\mathfrak{f})$ containing the ideal $(2+\zeta_5)\mathcal{O}_K$.
Let $C_k=C_1^k$ for an integer $k$.
Then we have
\begin{eqnarray*}
\varphi(2+\zeta_5)&=&(2+\zeta_5)(2+\zeta_5^3)\\
&=&-2\zeta_5-4\zeta_5^2-2\zeta_5^3-3\zeta_5^4\quad\textrm{since $\sum_{k=0}^4\zeta_5^k=0$}\\
&\equiv&x_1+2x_2+0\cdot x_3+3x_4 \Mod{\mathfrak{f}},
\end{eqnarray*}
where
\begin{equation*}
x_1=\zeta_5^2,\quad x_2=\zeta_5^4,\quad x_3=\zeta_5,\quad x_4=\zeta_5+\zeta_5^3.
\end{equation*}
Hence we see from (\ref{cm point}), (\ref{value})  and Lemma \ref{bigtransf} (i) that
\begin{equation*}
\Theta_\mathfrak{f}(C_1)=\Theta_{\left[\begin{smallmatrix}
1/5 \\ 2/5\\0 \\ 3/5
\end{smallmatrix}\right]}(Z^*_5)\approx-2.13359\times 10^{-69}+4.17297\times 10^{-70}\mathrm{i},
\end{equation*}
where 
\begin{eqnarray*}
Z_5^*=
\left[\begin{matrix}
\zeta_5&\zeta_5+\zeta_5^3\\
\zeta_5^2&\zeta_5^2+\zeta_5
\end{matrix}\right]^{-1}
\left[\begin{matrix}
\zeta_5^2&\zeta_5^4\\
\zeta_5^4&\zeta_5^3
\end{matrix}\right].
\end{eqnarray*}
In like manner, we obtain
\begin{equation*}
\begin{array}{ccccl}
\Theta_\mathfrak{f}(C_2)&=&\Theta_{\left[\begin{smallmatrix}
3/5 \\ 1/5\\3/5 \\ 2/5
\end{smallmatrix}\right]}(Z^*_5)&\approx&(4.16089-1.58401\mathrm{i})\times 10^{-50}\\
\Theta_\mathfrak{f}(C_3)&=&\Theta_{\left[\begin{smallmatrix}
3/5 \\ 0\\2/5 \\ 2/5
\end{smallmatrix}\right]}(Z^*_5)&\approx&(4.16089+1.58401\mathrm{i})\times 10^{-50}\\
\Theta_\mathfrak{f}(C_4)&=&\Theta_{\left[\begin{smallmatrix}
2/5 \\ 2/5\\4/5 \\ 4/5
\end{smallmatrix}\right]}(Z^*_5)&\approx&-2.13359\times 10^{-69}-4.17297\times 10^{-70}\mathrm{i}\\
\Theta_\mathfrak{f}(C_5)&=&\Theta_{\left[\begin{smallmatrix}
1/5 \\ 1/5\\0 \\ 1/5
\end{smallmatrix}\right]}(Z^*_5)&\approx&4.85930\times 10^{-254}.
\end{array}
\end{equation*}
Here we estimate these values with the aid of Maple software.
Observe that
\begin{equation*}
\Theta_\mathfrak{f}(C_k)=\overline{\Theta_\mathfrak{f}(C_{5-k})}\quad\textrm{for $k\in\mathbb{Z}$}.
\end{equation*}
Since all conjugates of $\Theta_\mathfrak{f}(C_1)$ are distinct, we get
\begin{equation*}
K_\mathfrak{f}=K(\Theta_\mathfrak{f}(C))\quad\textrm{for $C\in\mathrm{Cl}(\mathfrak{f})$} .
\end{equation*}

\item[(ii)] Let $\mathfrak{f}=6\mathcal{O}_K$.
Then we have $[K_\mathfrak{f} :K]=10$ and
\begin{equation*}
\mathrm{Cl}(\mathfrak{f})=\langle C_1 \rangle \cong \mathbb{Z}/10\mathbb{Z}.
\end{equation*}
In a similar way as in (i), one can estimate
\begin{equation*}
\begin{array}{ccccl}
\Theta_\mathfrak{f}(C_1)&=&\Theta_{\left[\begin{smallmatrix}
2/6 \\ 3/6\\0 \\ 4/6
\end{smallmatrix}\right]}(Z^*_5)&\approx&(-1.68219-1.88870\mathrm{i})\times 10^{-66}\\
\Theta_\mathfrak{f}(C_2)&=&\Theta_{\left[\begin{smallmatrix}
2/6 \\ 5/6\\0 \\ 2/6
\end{smallmatrix}\right]}(Z^*_5)&\approx&(9.08964+7.01165\mathrm{i})\times 10^{-135}\\
\Theta_\mathfrak{f}(C_3)&=&\Theta_{\left[\begin{smallmatrix}
3/6 \\ 4/6\\4/6 \\ 4/6
\end{smallmatrix}\right]}(Z^*_5)&\approx&(-3.16257+1.88358\mathrm{i})\times 10^{-65}\\
\Theta_\mathfrak{f}(C_4)&=&\Theta_{\left[\begin{smallmatrix}
5/6 \\ 3/6\\0 \\ 3/6
\end{smallmatrix}\right]}(Z^*_5)&\approx&(2.29176+1.51419\mathrm{i})\times 10^{-93}\\
\end{array}
\end{equation*}
\begin{equation*}
\begin{array}{ccccl}
\Theta_\mathfrak{f}(C_5)&=&\Theta_{\left[\begin{smallmatrix}
5/6 \\ 0\\0 \\ 4/6
\end{smallmatrix}\right]}(Z^*_5)&\approx&8.33316\times 10^{-136}\\
\Theta_\mathfrak{f}(C_6)&=&\Theta_{\left[\begin{smallmatrix}
5/6 \\ 2/6\\0 \\ 2/6
\end{smallmatrix}\right]}(Z^*_5)&\approx&(2.29176-1.51419\mathrm{i})\times 10^{-93}\\
\Theta_\mathfrak{f}(C_7)&=&\Theta_{\left[\begin{smallmatrix}
4/6 \\ 2/6\\1/6 \\ 3/6
\end{smallmatrix}\right]}(Z^*_5)&\approx&(-3.16257-1.88358\mathrm{i})\times 10^{-65}\\
\Theta_\mathfrak{f}(C_8)&=&\Theta_{\left[\begin{smallmatrix}
3/6 \\ 1/6\\0 \\ 3/6
\end{smallmatrix}\right]}(Z^*_5)&\approx&(9.08964-7.01165\mathrm{i})\times 10^{-135}\\
\Theta_\mathfrak{f}(C_9)&=&\Theta_{\left[\begin{smallmatrix}
5/6 \\ 3/6\\0 \\ 1/6
\end{smallmatrix}\right]}(Z^*_5)&\approx&(-1.68219+1.88870\mathrm{i})\times 10^{-66}\\
\Theta_\mathfrak{f}(C_{10})&=&\Theta_{\left[\begin{smallmatrix}
5/6 \\ 5/6\\0 \\ 5/6
\end{smallmatrix}\right]}(Z^*_5)&\approx&3.26284\times 10^{-348},
\end{array}
\end{equation*}
which are distinct. 
Note that
\begin{equation*}
\Theta_\mathfrak{f}(C_k)=\overline{\Theta_\mathfrak{f}(C_{10-k})}\quad\textrm{for $k\in\mathbb{Z}$}.
\end{equation*}
Therefore, we conclude
\begin{equation*}
K_\mathfrak{f}=K(\Theta_\mathfrak{f}(C))\quad\textrm{for every $C\in\mathrm{Cl}(\mathfrak{f})$} .
\end{equation*}

\end{itemize}
\end{example}

\bibliographystyle{amsplain}

\address{
Department of Mathematical Sciences \\
KAIST \\
Daejeon 34141\\
Republic of Korea} {jkkoo@math.kaist.ac.kr}
\address{
Laboratoire de Math\'ematiques\\
Institut Fourier\\
B.P. 74\\
F-38402  Saint-Martin-d'H\`eres\\
France} {gilles.rbrt@gmail.com}
\address{
Department of Mathematics\\
Hankuk University of Foreign Studies\\
Yongin-si, Gyeonggi-do 17035\\
Republic of Korea} {dhshin@hufs.ac.kr}
\address{
Department of Mathematical Sciences\\
Ulsan National Institute of Science and Technology\\
Ulsan 44919\\
Republic of Korea} {dsyoon@unist.ac.kr}

\end{document}